\documentclass[10pt]{amsart}
\usepackage{a4}

\usepackage{xcolor}

\usepackage{amsmath}
\usepackage{amssymb}

\usepackage{longtable}
\usepackage{booktabs}

\usepackage{enumerate}
\usepackage{enumitem}


\usepackage[all]{xy}
\usepackage{tikz}	
\usepackage{tikz-3dplot}
\usepackage{tikz-cd}
\usepackage{graphicx}

\theoremstyle{plain}

\newtheorem{theorem}{Theorem}[section]
\newtheorem{lemma}[theorem]{Lemma}
\newtheorem{proposition}[theorem]{Proposition}

\theoremstyle{definition}

\newtheorem{setting}[theorem]{Setting}
\newtheorem{construction}[theorem]{Construction}
\newtheorem{remark}[theorem]{Remark}

\theoremstyle{remark}

\numberwithin{equation}{section}

\sloppy 

\usetikzlibrary{patterns,matrix,arrows,shapes}
\usetikzlibrary{decorations.pathreplacing}


\newcommand\CC{\mathbb{C}}
\newcommand\QQ{\mathbb{Q}}
\newcommand\ZZ{\mathbb{Z}}
\newcommand\PP{\mathbb{P}}
\newcommand\TT{\mathbb{T}}

\newcommand\AAA{\mathcal{A}}
\newcommand\RRR{\mathcal{R}}
\newcommand\OOO{\mathcal{O}}
\newcommand\nnn{\mathfrak n}
\newcommand\KKK{\mathcal{K}}

\newcommand\bangle[1]{\langle #1 \rangle}
\newcommand\git{/\!\!/}
\newcommand\Cl{\mathrm{Cl}}

\begin{document}
\title[On canonical Fano intrinsic quadrics]%
{On canonical Fano intrinsic quadrics}

\subjclass[2010]{14J45, 14J30, 14L30}

\author[C.~Hische]{Christoff Hische} 
\address{Mathematisches Institut, Eberhard Karls Universit\"at T\"ubingen, Auf der Morgenstelle 10, 72076 T\"ubingen, Germany}
\email{hische@math.uni-tuebingen.de}

\begin{abstract}
We classify all $\QQ$-factorial Fano intrinsic quadrics of dimension three and Picard number one having at most canonical singularities.
\end{abstract}

\maketitle

\section{Introduction}
\noindent
This article contributes to the classification of Fano 3-folds, 
i.e.\ normal projective varieties of dimension three with an 
ample anticanonical divisor. 
For the smooth Fano 3-folds, the work of 
Iskovskih~\cite{Isk1977, Isk1978} and Mori/Mukai~\cite{MorMuk1981} provides a detailed picture. 
The singular case, in contrast, is widely open
in general.
Toric Fano 3-folds with at most canonical singularities, 
have been completely classified by Kasprzyk in~\cite{Kas2006,Kas2010}.

In the present article, we consider {\em intrinsic quadrics}.
These are normal, projective varieties with a Cox ring 
defined by a single quadratic relation~$q$, see~\cite{BerHau2003}.
Intrinsic quadrics have been used i.a.\ by Bourqui in~\cite{Bou2011} 
as a testing ground for Manin's conjecture. 
Moreover, \cite{FahHau2020} gives concrete descriptions 
of all smooth intrinsic quadrics in the Picard numbers
one and two.
Every Fano intrinsic quadric $X$ is completely determined 
by its Cox ring 
$$
\RRR(X) 
\ = \ 
\bigoplus_{\Cl(X)} \RRR(X)_w 
\ = \ 
\CC[T_1,\ldots,T_r]/\bangle{q}.
$$ 
If $X$ is $\QQ$-factorial and of Picard number one, then
we regain $X$ from its Cox ring as follows:
The quasitorus $H$ with character group 
$\mathbb{X}(H) \cong \Cl(X)$ acts 
diagonally on $\CC^r$ via the characters 
corresponding to the degrees $
w_1,\ldots,w_r \in \Cl(X)$
of the generators $T_1,\ldots,T_r$.
Our variety $X$ equals the good quotient 
$(V(q) \setminus \{0\}) \git H$.

The description of $X$ via its Cox ring allows us to explicitly compute certain invariants of $X$ i.a. its anticanonical self intersection number $-\KKK_X^3$ and its Fano index, i.e.\ the largest integer $q(X)$ such that $-\KKK_X = q(X)\cdot w$ holds for some $w~\in~\Cl(X)$.

\begin{theorem}\label{theorem:classification}
Every $\QQ$-factorial Fano intrinsic quadric of 
dimension three and Picard number one with at
most canonical singularities is isomorphic to 
precisely one of the varieties $X$  in the list 
below, specified by its Cox ring 
$\CC[T_1,\ldots,T_r]/\bangle{q}$  
and the matrix $Q$ having the
$\Cl(X)$-degrees $w_i$ of the generators $T_i$ 
as its columns. 
{\setlength{\tabcolsep}{5pt}\setlength{\arraycolsep}{2pt}
\begin{longtable}{ccccccc}
No.
&
$\RRR(X)$
&
$\Cl(X)$
&
$Q=[w_1,\ldots,w_r]$
&
$-\KKK_X$
&
$q(X)$
&
$-\KKK_X^3$
\\
\toprule
1
&
$
\frac{\CC[T_{1},T_{2},T_{3},T_{4},T_{5}]}{\bangle{T_{1}T_{2}+T_3T_4+T_5^2}}
$
&
$\ZZ$
&
{\tiny
$
\left[\begin{array}{ccccc}
1&1&1&1&1
\end{array}\right]
$
}
&
{\tiny
$
\left[
\begin{array}{c}
3
\end{array}
\right]
$
}
&
$3$
&
$54$
\\
\midrule
2
&
$
\frac{\CC[T_{1},T_{2},T_{3},T_{4},T_{5}]}{\bangle{T_{1}T_{2}+T_3T_4+T_5^2}}
$
&
$\ZZ$
&
{\tiny
$
\left[\begin{array}{ccccc}
2&2&1&3&2
\end{array}\right]
$
}
&
{\tiny
$
\left[
\begin{array}{c}
6
\end{array}
\right]
$
}
&
$6$
&
$36$
\\
\midrule
3
&
$
\frac{\CC[T_{1},T_{2},T_{3},T_{4},T_{5}]}{\bangle{T_{1}T_{2}+T_3T_4+T_5^2}}
$
&
$\ZZ$
&
{\tiny
$
\left[\begin{array}{ccccc}
1&3&1&3&2
\end{array}\right]
$
}
&
{\tiny
$
\left[
\begin{array}{c}
6
\end{array}
\right]
$
}&
$6$
&
$48$
\\
\midrule
4
&
$
\frac{\CC[T_{1},T_{2},T_{3},T_{4},T_{5}]}{\bangle{T_{1}T_{2}+T_3T_4+T_5^2}}
$
&
$\ZZ$
&
{\tiny
$
\left[\begin{array}{ccccc}
2&4&1&5&3
\end{array}\right]
$
}
&
{\tiny
$
\left[
\begin{array}{c}
9
\end{array}
\right]
$
}
&
$9$
&
$\frac{729}{20}$
\\
\midrule
5
&
$
\frac{\CC[T_{1},T_{2},T_{3},T_{4},T_{5}]}{\bangle{T_{1}T_{2}+T_3T_4+T_5^2}}
$
&
$\ZZ$
&
{\tiny
$
\left[\begin{array}{ccccc}
2&6&3&5&4
\end{array}\right]
$
}
&
{\tiny
$
\left[
\begin{array}{c}
12
\end{array}
\right]
$
}
&
$12$
&
$\frac{96}{5}$
\\
\midrule
6
&
$
\frac{\CC[T_{1},T_{2},T_{3},T_{4},T_{5}]}{\bangle{T_{1}T_{2}+T_3T_4+T_5^2}}
$
&
$\ZZ$
&
{\tiny
$
\left[\begin{array}{ccccc}
3&5&1&7&4
\end{array}\right]
$
}
&
{\tiny
$
\left[
\begin{array}{c}
12
\end{array}
\right]
$
}
&
$12$
&
$\frac{1152}{35}$
\\
\midrule
7
&
$
\frac{\CC[T_{1},T_{2},T_{3},T_{4},T_{5}]}{\bangle{T_{1}T_{2}+T_3T_4+T_5^2}}
$
&
$\ZZ$
&
{\tiny
$
\left[\begin{array}{ccccc}
3&7&2&8&5
\end{array}\right]
$
}
&
{\tiny
$
\left[
\begin{array}{c}
15
\end{array}
\right]
$
}
&
$15$
&
$\frac{1125}{56}$
\\
\midrule
8
&
$
\frac{\CC[T_{1},T_{2},T_{3},T_{4},S_{1}]}{\bangle{T_{1}T_{2}+T_3^2+T_4^2}}
$
&
$\ZZ\times \ZZ_2$
&
{\tiny
$
\left[\begin{array}{ccccc}
4&2&3&3&2
\\
\bar{1}&\bar{1}&\bar{0}&\bar{1}&\bar{0}
\end{array}\right]
$
}
&
{\tiny
$
\left[
\begin{array}{c}
8
\\
\bar{1}
\end{array}
\right]
$
}
&
$1$
&
$\frac{32}{3}$
\\
\midrule
9
&
$
\frac{\CC[T_{1},T_{2},T_{3},T_{4},S_{1}]}{\bangle{T_{1}T_{2}+T_3^2+T_4^2}}
$
&
$\ZZ\times \ZZ_2$
&
{\tiny
$
\left[\begin{array}{ccccc}
6&4&5&5&2
\\
\bar{1}&\bar{1}&\bar{0}&\bar{1}&\bar{0}
\end{array}\right]
$
}
&
{\tiny
$
\left[
\begin{array}{c}
12
\\
\bar{1}
\end{array}
\right]
$
}
&
$3$
&
$\frac{36}{5}$
\\
\midrule
10
&
$
\frac{\CC[T_{1},T_{2},T_{3},T_{4},S_{1}]}{\bangle{T_{1}T_{2}+T_3^2+T_4^2}}
$
&
$\ZZ\times \ZZ_2$
&
{\tiny
$
\left[\begin{array}{ccccc}
4&2&3&3&6
\\
\bar{1}&\bar{1}&\bar{0}&\bar{1}&\bar{0}
\end{array}\right]
$
}
&
{\tiny
$
\left[
\begin{array}{c}
12
\\
\bar{1}
\end{array}
\right]
$
}
&
$3$
&
$12$
\\
\midrule
11
&
$
\frac{\CC[T_{1},T_{2},T_{3},T_{4},T_{5}]}{\bangle{T_{1}T_{2}+T_3T_4+T_5^2}}
$
&
$\ZZ\times \ZZ_2$
&
{\tiny
$
\left[\begin{array}{ccccc}
1&3&1&3&2\\
\bar{1}&\bar{1}&\bar{0}&\bar{0}&\bar{1}
\end{array}\right]
$
}
&
{\tiny
$
\left[
\begin{array}{c}
6
\\
\bar{1}
\end{array}
\right]
$
}
&
$3$
&
$24$
\\
\midrule
12
&
$
\frac{\CC[T_{1},T_{2},T_{3},T_{4},S_{1}]}{\bangle{T_{1}T_{2}+T_3^2+T_4^2}}
$
&
$\ZZ\times \ZZ_2$
&
{\tiny
$
\left[\begin{array}{ccccc}
1&1&1&1&1
\\
\bar{1}&\bar{1}&\bar{1}&\bar{0}&\bar{0}
\end{array}\right]
$
}
&
{\tiny
$
\left[
\begin{array}{c}
3
\\
\bar{1}
\end{array}
\right]
$
}
&
$3$
&
$27$
\\
\midrule
13
&
$
\frac{\CC[T_{1},T_{2},T_{3},T_{4},T_{5}]}{\bangle{T_{1}T_{2}+T_3T_4+T_5^2}}
$
&
$\ZZ\times \ZZ_2$
&
{\tiny
$
\left[\begin{array}{ccccc}
1&1&1&1&1\\
\bar{1}&\bar{1}&\bar{0}&\bar{0}&\bar{0}
\end{array}\right]
$
}
&
{\tiny
$
\left[
\begin{array}{c}
3
\\
\bar{0}
\end{array}
\right]
$
}
&
$3$
&
$54$
\\
\midrule
14
&
$
\frac{\CC[T_{1},T_{2},T_{3},T_{4},S_{1}]}{\bangle{T_{1}T_{2}+T_3^2+T_4^2}}
$
&
$\ZZ\times \ZZ_2$
&
{\tiny
$
\left[\begin{array}{ccccc}
1&1&1&1&2
\\
\bar{0}&\bar{0}&\bar{1}&\bar{0}&\bar{1}
\end{array}\right]
$
}
&
{\tiny
$
\left[
\begin{array}{c}
4
\\
\bar{0}
\end{array}
\right]
$
}
&
$4$
&
$32$
\\
\midrule
15
&
$
\frac{\CC[T_{1},T_{2},T_{3},T_{4},S_{1}]}{\bangle{T_{1}T_{2}+T_3^2+T_4^2}}
$
&
$\ZZ\times \ZZ_2$
&
{\tiny
$
\left[\begin{array}{ccccc}
3&1&2&2&1
\\
\bar{1}&\bar{1}&\bar{1}&\bar{0}&\bar{0}
\end{array}\right]
$
}
&
{\tiny
$
\left[
\begin{array}{c}
5
\\
\bar{1}
\end{array}
\right]
$
}
&
$5$
&
$\frac{125}{6}$
\\
\midrule
16
&
$
\frac{\CC[T_{1},T_{2},T_{3},T_{4},S_{1}]}{\bangle{T_{1}T_{2}+T_3^2+T_4^2}}
$
&
$\ZZ\times \ZZ_2$
&
{\tiny
$
\left[\begin{array}{ccccc}
1&3&2&2&2
\\
\bar{0}&\bar{0}&\bar{0}&\bar{1}&\bar{1}
\end{array}\right]
$
}
&
{\tiny
$
\left[
\begin{array}{c}
6
\\
\bar{0}
\end{array}
\right]
$
}
&
$6$
&
$18$
\\
\midrule
17
&
$
\frac{\CC[T_{1},T_{2},T_{3},T_{4},T_{5}]}{\bangle{T_{1}T_{2}+T_3T_4+T_5^2}}
$
&
$\ZZ\times \ZZ_2$
&
{\tiny
$
\left[\begin{array}{ccccc}
2&2&1&3&2\\
\bar{1}&\bar{1}&\bar{0}&\bar{0}&\bar{0}
\end{array}\right]
$
}
&
{\tiny
$
\left[
\begin{array}{c}
6
\\
\bar{0}
\end{array}
\right]
$
}
&
$6$
&
$18$
\\
\midrule
18
&
$
\frac{\CC[T_{1},T_{2},T_{3},T_{4},T_{5}]}{\bangle{T_{1}T_{2}+T_3T_4+T_5^2}}
$
&
$\ZZ\times \ZZ_2$
&
{\tiny
$
\left[\begin{array}{ccccc}
1&3&1&3&2\\
\bar{1}&\bar{1}&\bar{0}&\bar{0}&\bar{0}
\end{array}\right]
$
}
&
{\tiny
$
\left[
\begin{array}{c}
6
\\
\bar{0}
\end{array}
\right]
$
}
&
$6$
&
$24$
\\
\midrule
19
&
$
\frac{\CC[T_{1},T_{2},T_{3},T_{4},S_{1}]}{\bangle{T_{1}T_{2}+T_3^2+T_4^2}}
$
&
$\ZZ\times \ZZ_2$
&
{\tiny
$
\left[\begin{array}{ccccc}
4&2&3&3&1
\\
\bar{1}&\bar{1}&\bar{1}&\bar{0}&\bar{0}
\end{array}\right]
$
}
&
{\tiny
$
\left[
\begin{array}{c}
7
\\
\bar{1}
\end{array}
\right]
$
}
&
$7$
&
$\frac{343}{48}$
\\
\midrule
20
&
$
\frac{\CC[T_{1},T_{2},T_{3},T_{4},S_{1}]}{\bangle{T_{1}T_{2}+T_3^2+T_4^2}}
$
&
$\ZZ\times \ZZ_2$
&
{\tiny
$
\left[\begin{array}{ccccc}
4&2&3&3&2
\\
\bar{1}&\bar{1}&\bar{1}&\bar{0}&\bar{1}
\end{array}\right]
$
}
&
{\tiny
$
\left[
\begin{array}{c}
8
\\
\bar{0}
\end{array}
\right]
$
}
&
$8$
&
$\frac{32}{3}$
\\
\midrule
21
&
$
\frac{\CC[T_{1},T_{2},T_{3},T_{4},S_{1}]}{\bangle{T_{1}T_{2}+T_3^2+T_4^2}}
$
&
$\ZZ\times \ZZ_2$
&
{\tiny
$
\left[\begin{array}{ccccc}
4&2&3&3&2
\\
\bar{0}&\bar{0}&\bar{1}&\bar{0}&\bar{1}
\end{array}\right]
$
}
&
{\tiny
$
\left[
\begin{array}{c}
8
\\
\bar{0}
\end{array}
\right]
$
}
&
$8$
&
$\frac{32}{3}$
\\
\midrule
22
&
$
\frac{\CC[T_{1},T_{2},T_{3},T_{4},S_{1}]}{\bangle{T_{1}T_{2}+T_3^2+T_4^2}}
$
&
$\ZZ\times \ZZ_2$
&
{\tiny
$
\left[\begin{array}{ccccc}
5&1&3&3&2
\\
\bar{0}&\bar{0}&\bar{1}&\bar{0}&\bar{1}
\end{array}\right]
$
}
&
{\tiny
$
\left[
\begin{array}{c}
8
\\
\bar{0}
\end{array}
\right]
$
}
&
$8$
&
$\frac{256}{15}$
\\
\midrule
23
&
$
\frac{\CC[T_{1},T_{2},T_{3},T_{4},S_{1}]}{\bangle{T_{1}T_{2}+T_3^2+T_4^2}}
$
&
$\ZZ\times \ZZ_2$
&
{\tiny
$
\left[\begin{array}{ccccc}
1&3&2&2&4
\\
\bar{0}&\bar{0}&\bar{1}&\bar{0}&\bar{1}
\end{array}\right]
$
}
&
{\tiny
$
\left[
\begin{array}{c}
8
\\
\bar{0}
\end{array}
\right]
$
}
&
$8$
&
$\frac{64}{3}$
\\
\midrule
24
&
$
\frac{\CC[T_{1},T_{2},T_{3},T_{4},S_{1}]}{\bangle{T_{1}T_{2}+T_3^2+T_4^2}}
$
&
$\ZZ\times \ZZ_2$
&
{\tiny
$
\left[\begin{array}{ccccc}
5&3&4&4&1
\\
\bar{1}&\bar{1}&\bar{1}&\bar{0}&\bar{0}
\end{array}\right]
$
}
&
{\tiny
$
\left[
\begin{array}{c}
9
\\
\bar{1}
\end{array}
\right]
$
}
&
$9$
&
$\frac{243}{20}$
\\
\midrule
25
&
$
\frac{\CC[T_{1},T_{2},T_{3},T_{4},S_{1}]}{\bangle{T_{1}T_{2}+T_3^2+T_4^2}}
$
&
$\ZZ\times \ZZ_2$
&
{\tiny
$
\left[\begin{array}{ccccc}
3&5&4&4&2
\\
\bar{1}&\bar{1}&\bar{1}&\bar{0}&\bar{1}
\end{array}\right]
$
}
&
{\tiny
$
\left[
\begin{array}{c}
10
\\
\bar{0}
\end{array}
\right]
$
}
&
$10$
&
$\frac{25}{3}$
\\
\midrule
26
&
$
\frac{\CC[T_{1},T_{2},T_{3},T_{4},S_{1}]}{\bangle{T_{1}T_{2}+T_3^2+T_4^2}}
$
&
$\ZZ\times \ZZ_2$
&
{\tiny
$
\left[\begin{array}{ccccc}
5&1&3&3&4
\\
\bar{1}&\bar{1}&\bar{1}&\bar{0}&\bar{1}
\end{array}\right]
$
}
&
{\tiny
$
\left[
\begin{array}{c}
10
\\
\bar{0}
\end{array}
\right]
$
}
&
$10$
&
$\frac{50}{3}$
\\
\midrule
27
&
$
\frac{\CC[T_{1},T_{2},T_{3},T_{4},S_{1}]}{\bangle{T_{1}T_{2}+T_3^2+T_4^2}}
$
&
$\ZZ\times \ZZ_2$
&
{\tiny
$
\left[\begin{array}{ccccc}
6&4&5&5&2
\\
\bar{1}&\bar{1}&\bar{0}&\bar{1}&\bar{1}
\end{array}\right]
$
}
&
{\tiny
$
\left[
\begin{array}{c}
12
\\
\bar{0}
\end{array}
\right]
$
}
&
$12$
&
$\frac{36}{5}$
\\
\midrule
28
&
$
\frac{\CC[T_{1},T_{2},T_{3},T_{4},S_{1}]}{\bangle{T_{1}T_{2}+T_3^2+T_4^2}}
$
&
$\ZZ\times \ZZ_2$
&
{\tiny
$
\left[\begin{array}{ccccc}
6&4&5&5&2
\\
\bar{0}&\bar{0}&\bar{0}&\bar{1}&\bar{1}
\end{array}\right]
$
}
&
{\tiny
$
\left[
\begin{array}{c}
12
\\
\bar{0}
\end{array}
\right]
$
}
&
$12$
&
$\frac{36}{5}$
\\
\midrule
29
&
$
\frac{\CC[T_{1},T_{2},T_{3},T_{4},S_{1}]}{\bangle{T_{1}T_{2}+T_3^2+T_4^2}}
$
&
$\ZZ\times \ZZ_2$
&
{\tiny
$
\left[\begin{array}{ccccc}
7&3&5&5&2
\\
\bar{1}&\bar{1}&\bar{1}&\bar{0}&\bar{1}
\end{array}\right]
$
}
&
{\tiny
$
\left[
\begin{array}{c}
12
\\
\bar{0}
\end{array}
\right]
$
}
&
$12$
&
$\frac{288}{35}$
\\
\midrule
30
&
$
\frac{\CC[T_{1},T_{2},T_{3},T_{4},S_{1}]}{\bangle{T_{1}T_{2}+T_3^2+T_4^2}}
$
&
$\ZZ\times \ZZ_2$
&
{\tiny
$
\left[\begin{array}{ccccc}
2&4&3&3&6
\\
\bar{0}&\bar{0}&\bar{1}&\bar{0}&\bar{1}
\end{array}\right]
$
}
&
{\tiny
$
\left[
\begin{array}{c}
12
\\
\bar{0}
\end{array}
\right]
$
}
&
$12$
&
$12$
\\
\midrule
31
&
$
\frac{\CC[T_{1},T_{2},T_{3},T_{4},S_{1}]}{\bangle{T_{1}T_{2}+T_3^2+T_4^2}}
$
&
$\ZZ\times \ZZ_2$
&
{\tiny
$
\left[\begin{array}{ccccc}
2&4&3&3&6
\\
\bar{1}&\bar{1}&\bar{1}&\bar{0}&\bar{1}
\end{array}\right]
$
}
&
{\tiny
$
\left[
\begin{array}{c}
12
\\
\bar{0}
\end{array}
\right]
$
}
&
$12$
&
$12$
\\
\midrule
32
&
$
\frac{\CC[T_{1},T_{2},T_{3},T_{4},S_{1}]}{\bangle{T_{1}T_{2}+T_3^2+T_4^2}}
$
&
$\ZZ\times \ZZ_2$
&
{\tiny
$
\left[\begin{array}{ccccc}
7&3&5&5&4
\\
\bar{0}&\bar{0}&\bar{1}&\bar{0}&\bar{1}
\end{array}\right]
$
}
&
{\tiny
$
\left[
\begin{array}{c}
14
\\
\bar{0}
\end{array}
\right]
$
}
&
$14$
&
$\frac{98}{15}$
\\
\midrule
33
&
$
\frac{\CC[T_{1},T_{2},T_{3},T_{4},T_{5}]}{\bangle{T_{1}T_{2}+T_3T_4+T_5^2}}
$
&
$\ZZ\times \ZZ_3$
&
{\tiny
$
\left[\begin{array}{ccccc}
1&1&1&1&1\\
\bar{1}&\bar{2}&\bar{0}&\bar{0}&\bar{0}
\end{array}\right]
$
}
&
{\tiny
$
\left[
\begin{array}{c}
3
\\
\bar{0}
\end{array}
\right]
$
}
&
$3$
&
$18$
\\
\midrule
34
&
$
\frac{\CC[T_{1},T_{2},T_{3},T_{4},T_{5}]}{\bangle{T_{1}T_{2}+T_3T_4+T_5^2}}
$
&
$\ZZ\times \ZZ_3$
&
{\tiny
$
\left[\begin{array}{ccccc}
1&1&1&1&1\\
\bar{1}&\bar{2}&\bar{1}&\bar{2}&\bar{0}
\end{array}\right]
$
}
&
{\tiny
$
\left[
\begin{array}{c}
3
\\
\bar{0}
\end{array}
\right]
$
}
&
$3$
&
$18$
\\
\midrule
35
&
$
\frac{\CC[T_{1},T_{2},T_{3},T_{4},T_{5}]}{\bangle{T_{1}T_{2}+T_3T_4+T_5^2}}
$
&
$\ZZ\times \ZZ_3$
&
{\tiny
$
\left[\begin{array}{ccccc}
2&2&1&3&2\\
\bar{0}&\bar{2}&\bar{0}&\bar{2}&\bar{1}
\end{array}\right]
$
}
&
{\tiny
$
\left[
\begin{array}{c}
6
\\
\bar{0}
\end{array}
\right]
$
}
&
$6$
&
$12$
\\
\midrule
36
&
$
\frac{\CC[T_{1},T_{2},T_{3},T_{4},T_{5}]}{\bangle{T_{1}T_{2}+T_3T_4+T_5^2}}
$
&
$\ZZ\times \ZZ_3$
&
{\tiny
$
\left[\begin{array}{ccccc}
2&2&1&3&2\\
\bar{0}&\bar{2}&\bar{2}&\bar{0}&\bar{1}
\end{array}\right]
$
}
&
{\tiny
$
\left[
\begin{array}{c}
6
\\
\bar{0}
\end{array}
\right]
$
}
&
$6$
&
$12$
\\
\midrule
37
&
$
\frac{\CC[T_{1},T_{2},T_{3},T_{4},T_{5}]}{\bangle{T_{1}T_{2}+T_3T_4+T_5^2}}
$
&
$\ZZ\times \ZZ_3$
&
{\tiny
$
\left[\begin{array}{ccccc}
1&3&1&3&2\\
\bar{0}&\bar{2}&\bar{1}&\bar{1}&\bar{1}
\end{array}\right]
$
}
&
{\tiny
$
\left[
\begin{array}{c}
6
\\
\bar{0}
\end{array}
\right]
$
}
&
$6$
&
$16$
\\
\midrule
38
&
$
\frac{\CC[T_{1},T_{2},T_{3},T_{4},S_{1}]}{\bangle{T_{1}T_{2}+T_3^2+T_4^2}}
$
&
$\ZZ\times \ZZ_4$
&
{\tiny
$
\left[\begin{array}{ccccc}
1&1&1&1&2
\\
\bar{1}&\bar{3}&\bar{2}&\bar{0}&\bar{3}
\end{array}\right]
$
}
&
{\tiny
$
\left[
\begin{array}{c}
4
\\
\bar{1}
\end{array}
\right]
$
}
&
$1$
&
$19$
\\
\midrule
39
&
$
\frac{\CC[T_{1},T_{2},T_{3},T_{4},S_{1}]}{\bangle{T_{1}T_{2}+T_3^2+T_4^2}}
$
&
$\ZZ\times \ZZ_4$
&
{\tiny
$
\left[\begin{array}{ccccc}
2&4&3&3&2
\\
\bar{3}&\bar{1}&\bar{0}&\bar{2}&\bar{0}
\end{array}\right]
$
}
&
{\tiny
$
\left[
\begin{array}{c}
8
\\
\bar{2}
\end{array}
\right]
$
}
&
$2$
&
$\frac{16}{3}$
\\
\midrule
40
&
$
\frac{\CC[T_{1},T_{2},T_{3},T_{4},S_{1}]}{\bangle{T_{1}T_{2}+T_3^2+T_4^2}}
$
&
$\ZZ\times \ZZ_4$
&
{\tiny
$
\left[\begin{array}{ccccc}
1&1&1&1&2
\\
\bar{2}&\bar{0}&\bar{1}&\bar{3}&\bar{2}
\end{array}\right]
$
}
&
{\tiny
$
\left[
\begin{array}{c}
4
\\
\bar{2}
\end{array}
\right]
$
}
&
$2$
&
$16$
\\
\midrule
41
&
$
\frac{\CC[T_{1},T_{2},T_{3},T_{4},T_{5}]}{\bangle{T_{1}T_{2}+T_3T_4+T_5^2}}
$
&
$\ZZ\times \ZZ_4$
&
{\tiny
$
\left[\begin{array}{ccccc}
1&1&1&1&1\\
\bar{2}&\bar{2}&\bar{1}&\bar{3}&\bar{0}
\end{array}\right]
$
}
&
{\tiny
$
\left[
\begin{array}{c}
3
\\
\bar{0}
\end{array}
\right]
$
}
&
$3$
&
$\frac{27}{2}$
\\
\midrule
42
&
$
\frac{\CC[T_{1},T_{2},T_{3},T_{4},S_{1}]}{\bangle{T_{1}T_{2}+T_3^2+T_4^2}}
$
&
$\ZZ\times \ZZ_4$
&
{\tiny
$
\left[\begin{array}{ccccc}
1&1&1&1&1
\\
\bar{2}&\bar{0}&\bar{1}&\bar{3}&\bar{3}
\end{array}\right]
$
}
&
{\tiny
$
\left[
\begin{array}{c}
3
\\
\bar{3}
\end{array}
\right]
$
}
&
$3$
&
$\frac{27}{2}$
\\
\midrule
43
&
$
\frac{\CC[T_{1},T_{2},T_{3},T_{4},S_{1}]}{\bangle{T_{1}T_{2}+T_3^2+T_4^2}}
$
&
$\ZZ\times \ZZ_4$
&
{\tiny
$
\left[\begin{array}{ccccc}
1&1&1&1&1
\\
\bar{0}&\bar{2}&\bar{1}&\bar{3}&\bar{0}
\end{array}\right]
$
}
&
{\tiny
$
\left[
\begin{array}{c}
3
\\
\bar{0}
\end{array}
\right]
$
}
&
$3$
&
$\frac{27}{2}$
\\
\midrule
44
&
$
\frac{\CC[T_{1},T_{2},T_{3},T_{4},S_{1}]}{\bangle{T_{1}T_{2}+T_3^2+T_4^2}}
$
&
$\ZZ\times \ZZ_4$
&
{\tiny
$
\left[\begin{array}{ccccc}
1&1&1&1&2
\\
\bar{2}&\bar{0}&\bar{1}&\bar{3}&\bar{0}
\end{array}\right]
$
}
&
{\tiny
$
\left[
\begin{array}{c}
4
\\
\bar{0}
\end{array}
\right]
$
}
&
$4$
&
$16$
\\
\midrule
45
&
$
\frac{\CC[T_{1},T_{2},T_{3},T_{4},S_{1}]}{\bangle{T_{1}T_{2}+T_3^2+T_4^2}}
$
&
$\ZZ\times \ZZ_4$
&
{\tiny
$
\left[\begin{array}{ccccc}
3&1&2&2&1
\\
\bar{1}&\bar{1}&\bar{1}&\bar{3}&\bar{0}
\end{array}\right]
$
}
&
{\tiny
$
\left[
\begin{array}{c}
5
\\
\bar{0}
\end{array}
\right]
$
}
&
$5$
&
$\frac{49}{6}$
\\
\midrule
46
&
$
\frac{\CC[T_{1},T_{2},T_{3},T_{4},S_{1}]}{\bangle{T_{1}T_{2}+T_3^2+T_4^2}}
$
&
$\ZZ\times \ZZ_4$
&
{\tiny
$
\left[\begin{array}{ccccc}
1&3&2&2&2
\\
\bar{1}&\bar{1}&\bar{3}&\bar{1}&\bar{0}
\end{array}\right]
$
}
&
{\tiny
$
\left[
\begin{array}{c}
6
\\
\bar{0}
\end{array}
\right]
$
}
&
$6$
&
$9$
\\
\midrule
47
&
$
\frac{\CC[T_{1},T_{2},T_{3},T_{4},S_{1}]}{\bangle{T_{1}T_{2}+T_3^2+T_4^2}}
$
&
$\ZZ\times \ZZ_4$
&
{\tiny
$
\left[\begin{array}{ccccc}
3&1&2&2&2
\\
\bar{2}&\bar{0}&\bar{1}&\bar{3}&\bar{0}
\end{array}\right]
$
}
&
{\tiny
$
\left[
\begin{array}{c}
6
\\
\bar{0}
\end{array}
\right]
$
}
&
$6$
&
$9$
\\
\midrule
48
&
$
\frac{\CC[T_{1},T_{2},T_{3},T_{4},S_{1}]}{\bangle{T_{1}T_{2}+T_3^2+T_4^2}}
$
&
$\ZZ\times \ZZ_4$
&
{\tiny
$
\left[\begin{array}{ccccc}
2&4&3&3&2
\\
\bar{1}&\bar{3}&\bar{2}&\bar{0}&\bar{2}
\end{array}\right]
$
}
&
{\tiny
$
\left[
\begin{array}{c}
8
\\
\bar{0}
\end{array}
\right]
$
}
&
$8$
&
$\frac{16}{3}$
\\
\midrule
49
&
$
\frac{\CC[T_{1},T_{2},T_{3},T_{4},T_{5}]}{\bangle{T_{1}T_{2}+T_3T_4+T_5^2}}
$
&
$\ZZ\times \ZZ_5$
&
{\tiny
$
\left[\begin{array}{ccccc}
1&1&1&1&1\\
\bar{2}&\bar{3}&\bar{1}&\bar{4}&\bar{0}
\end{array}\right]
$
}
&
{\tiny
$
\left[
\begin{array}{c}
3
\\
\bar{0}
\end{array}
\right]
$
}
&
$3$
&
$\frac{54}{5}$
\\
\midrule
50
&
$
\frac{\CC[T_{1},T_{2},T_{3},T_{4},T_{5}]}{\bangle{T_{1}T_{2}+T_3T_4+T_5^2}}
$
&
$\ZZ\times \ZZ_6$
&
{\tiny
$
\left[\begin{array}{ccccc}
1&1&1&1&1\\
\bar{1}&\bar{5}&\bar{2}&\bar{4}&\bar{0}
\end{array}\right]
$
}
&
{\tiny
$
\left[
\begin{array}{c}
3
\\
\bar{0}
\end{array}
\right]
$
}
&
$3$
&
$9$
\\
\midrule
51
&
$
\frac{\CC[T_{1},T_{2},T_{3},T_{4},T_{5}]}{\bangle{T_{1}T_{2}+T_3T_4+T_5^2}}
$
&
$\ZZ\times \ZZ_6$
&
{\tiny
$
\left[\begin{array}{ccccc}
1&1&1&1&1\\
\bar{2}&\bar{4}&\bar{3}&\bar{3}&\bar{0}
\end{array}\right]
$
}
&
{\tiny
$
\left[
\begin{array}{c}
3
\\
\bar{0}
\end{array}
\right]
$
}
&
$3$
&
$9$
\\
\midrule
52
&
$
\frac{\CC[T_{1},T_{2},T_{3},T_{4},S_{1}]}{\bangle{T_{1}T_{2}+T_3^2+T_4^2}}
$
&
$\ZZ\times \ZZ_6$
&
{\tiny
$
\left[\begin{array}{ccccc}
1&1&1&1&1
\\
\bar{4}&\bar{0}&\bar{2}&\bar{5}&\bar{5}
\end{array}\right]
$
}
&
{\tiny
$
\left[
\begin{array}{c}
3
\\
\bar{0}
\end{array}
\right]
$
}
&
$3$
&
$9$
\\
\midrule
53
&
$
\frac{\CC[T_{1},T_{2},T_{3},T_{4},S_{1}]}{\bangle{T_{1}T_{2}+T_3^2+T_4^2}}
$
&
$\ZZ\times \ZZ_6$
&
{\tiny
$
\left[\begin{array}{ccccc}
1&1&1&1&2
\\
\bar{4}&\bar{0}&\bar{5}&\bar{2}&\bar{1}
\end{array}\right]
$
}
&
{\tiny
$
\left[
\begin{array}{c}
4
\\
\bar{2}
\end{array}
\right]
$
}
&
$4$
&
$\frac{32}{3}$
\\
\midrule
54
&
$
\frac{\CC[T_{1},T_{2},T_{3},T_{4},S_{1}]}{\bangle{T_{1}T_{2}+T_3^2+T_4^2}}
$
&
$\ZZ\times \ZZ_8$
&
{\tiny
$
\left[\begin{array}{ccccc}
1&1&1&1&1
\\
\bar{2}&\bar{0}&\bar{5}&\bar{1}&\bar{3}
\end{array}\right]
$
}
&
{\tiny
$
\left[
\begin{array}{c}
3
\\
\bar{1}
\end{array}
\right]
$
}
&
$3$
&
$\frac{27}{4}$
\\
\midrule
55
&
$
\frac{\CC[T_{1},T_{2},T_{3},T_{4},S_{1}]}{\bangle{T_{1}T_{2}+T_3^2+T_4^2}}
$
&
$\ZZ\times \ZZ_8$
&
{\tiny
$
\left[\begin{array}{ccccc}
1&1&1&1&2
\\
\bar{2}&\bar{0}&\bar{5}&\bar{1}&\bar{2}
\end{array}\right]
$
}
&
{\tiny
$
\left[
\begin{array}{c}
4
\\
\bar{0}
\end{array}
\right]
$
}
&
$4$
&
$8$
\\
\midrule
56
&
$
\frac{\CC[T_{1},T_{2},T_{3},T_{4},S_{1}]}{\bangle{T_{1}T_{2}+T_3^2+T_4^2}}
$
&
$\ZZ\times \ZZ_8$
&
{\tiny
$
\left[\begin{array}{ccccc}
1&1&1&1&2
\\
\bar{2}&\bar{0}&\bar{5}&\bar{1}&\bar{6}
\end{array}\right]
$
}
&
{\tiny
$
\left[
\begin{array}{c}
4
\\
\bar{4}
\end{array}
\right]
$
}
&
$4$
&
$8$
\\
\midrule
57
&
$
\frac{\CC[T_{1},T_{2},T_{3},T_{4},T_{5}]}{\bangle{T_{1}T_{2}+T_3T_4+T_5^2}}
$
&
$\ZZ\times \ZZ_9$
&
{\tiny
$
\left[\begin{array}{ccccc}
1&1&1&1&1\\
\bar{4}&\bar{5}&\bar{3}&\bar{6}&\bar{0}
\end{array}\right]
$
}
&
{\tiny
$
\left[
\begin{array}{c}
3
\\
\bar{0}
\end{array}
\right]
$
}
&
$3$
&
$6$
\\
\midrule
58
&
$
\frac{\CC[T_{1},T_{2},T_{3},T_{4},S_{1}]}{\bangle{T_{1}T_{2}+T_3^2+T_4^2}}
$
&
$\ZZ\times \ZZ_{12}$
&
{\tiny
$
\left[\begin{array}{ccccc}
1&1&1&1&1
\\
\bar{2}&\bar{0}&\bar{7}&\bar{1}&\bar{4}
\end{array}\right]
$
}
&
{\tiny
$
\left[
\begin{array}{c}
3
\\
\bar{0}
\end{array}
\right]
$
}
&
$3$
&
$\frac{9}{2}$
\\
\midrule
59
&
$
\frac{\CC[T_{1},T_{2},T_{3},S_{1},S_{2}]}{\bangle{T_{1}^2+T_{2}^2+T_{3}^2}}
$
&
$\ZZ\times(\ZZ_2)^2$
&
{\tiny
$
\left[\begin{array}{ccccc}
2&2&2&1&1
\\
\bar{1}&\bar{1}&\bar{0}&\bar{0}&\bar{0}
\\
\bar{0}&\bar{1}&\bar{1}&\bar{1}&\bar{0}
\end{array}\right]
$
}
&
{\tiny
$
\left[
\begin{array}{c}
4
\\
\bar{0}
\\
\bar{1}
\end{array}
\right]
$
}
&
$1$
&
$\frac{19}{2}$
\\
\midrule
60
&
$
\frac{\CC[T_{1},T_{2},T_{3},S_{1},S_{2}]}{\bangle{T_{1}^2+T_{2}^2+T_{3}^2}}
$
&
$\ZZ\times(\ZZ_2)^2$
&
{\tiny
$
\left[\begin{array}{ccccc}
1&1&1&1&2
\\
\bar{1}&\bar{1}&\bar{0}&\bar{0}&\bar{1}
\\
\bar{0}&\bar{1}&\bar{1}&\bar{0}&\bar{1}
\end{array}\right]
$
}
&
{\tiny
$
\left[
\begin{array}{c}
4
\\
\bar{1}
\\
\bar{1}
\end{array}
\right]
$
}
&
$1$
&
$16$
\\
\midrule
61
&
$
\frac{\CC[T_{1},T_{2},T_{3},S_{1},S_{2}]}{\bangle{T_{1}^2+T_{2}^2+T_{3}^2}}
$
&
$\ZZ\times(\ZZ_2)^2$
&
{\tiny
$
\left[\begin{array}{ccccc}
2&2&2&1&3
\\
\bar{1}&\bar{1}&\bar{0}&\bar{0}&\bar{1}
\\
\bar{0}&\bar{1}&\bar{1}&\bar{0}&\bar{1}
\end{array}\right]
$
}
&
{\tiny
$
\left[
\begin{array}{c}
6
\\
\bar{1}
\\
\bar{1}
\end{array}
\right]
$
}
&
$3$
&
$\frac{45}{4}$
\\
\midrule
62
&
$
\frac{\CC[T_{1},T_{2},T_{3},S_{1},S_{2}]}{\bangle{T_{1}^2+T_{2}^2+T_{3}^2}}
$
&
$\ZZ\times(\ZZ_2)^2$
&
{\tiny
$
\left[\begin{array}{ccccc}
1&1&1&1&1
\\
\bar{1}&\bar{1}&\bar{0}&\bar{0}&\bar{0}
\\
\bar{1}&\bar{0}&\bar{0}&\bar{1}&\bar{0}
\end{array}\right]
$
}
&
{\tiny
$
\left[
\begin{array}{c}
3
\\
\bar{0}
\\
\bar{0}
\end{array}
\right]
$
}
&
$3$
&
$\frac{27}{2}$
\\
\midrule
63
&
$
\frac{\CC[T_{1},T_{2},T_{3},T_{4},S_{1}]}{\bangle{T_{1}T_{2}+T_3^2+T_4^2}}
$
&
$\ZZ\times (\ZZ_{2})^2$
&
{\tiny
$
\left[\begin{array}{ccccc}
1&1&1&1&1
\\
\bar{0}&\bar{0}&\bar{1}&\bar{1}&\bar{0}
\\
\bar{1}&\bar{1}&\bar{1}&\bar{0}&\bar{0}
\end{array}\right]
$
}
&
{\tiny
$
\left[
\begin{array}{c}
3
\\
\bar{0}
\\
\bar{1}
\end{array}
\right]
$
}
&
$3$
&
$\frac{27}{2}$
\\
\midrule
64
&
$
\frac{\CC[T_1,\ldots,T_5]}{\bangle{T_1T_2+T_3^2+T_4^2+T_5^2}}
$
&
$\ZZ\times(\ZZ_2)^2$&
{\tiny
$\left[\begin{array}{ccccc}
     1&1&1&1&1  
     \\
     \bar 0&\bar 0&\bar 1&\bar 1&\bar 0
     \\
     \bar 1&\bar 1&\bar 0&\bar 1&\bar 0
\end{array}\right]$
}&
{\tiny
$
\left[
\begin{array}{c}
3
\\
\bar 0
\\
\bar 1
\end{array}
\right]
$
}
&
$3$
&
$\frac{27}{2}$
\\
\midrule
65
&
$
\frac{\CC[T_{1},T_{2},T_{3},T_{4},S_{1}]}{\bangle{T_{1}T_{2}+T_3^2+T_4^2}}
$
&
$\ZZ\times (\ZZ_{2})^2$
&
{\tiny
$
\left[\begin{array}{ccccc}
1&1&1&1&2
\\
\bar{0}&\bar{0}&\bar{1}&\bar{1}&\bar{0}
\\
\bar{0}&\bar{0}&\bar{0}&\bar{1}&\bar{1}
\end{array}\right]
$
}
&
{\tiny
$
\left[
\begin{array}{c}
4
\\
\bar{0}
\\
\bar{0}
\end{array}
\right]
$
}
&
$4$
&
$16$
\\
\midrule
66
&
$
\frac{\CC[T_{1},T_{2},T_{3},S_{1},S_{2}]}{\bangle{T_{1}^2+T_{2}^2+T_{3}^2}}
$
&
$\ZZ\times(\ZZ_2)^2$
&
{\tiny
$
\left[\begin{array}{ccccc}
3&3&3&1&2
\\
\bar{0}&\bar{0}&\bar{1}&\bar{0}&\bar{1}
\\
\bar{1}&\bar{0}&\bar{0}&\bar{0}&\bar{1}
\end{array}\right]
$
}
&
{\tiny
$
\left[
\begin{array}{c}
6
\\
\bar{0}
\\
\bar{0}
\end{array}
\right]
$
}
&
$6$
&
$6$
\\
\midrule
67
&
$
\frac{\CC[T_{1},T_{2},T_{3},S_{1},S_{2}]}{\bangle{T_{1}^2+T_{2}^2+T_{3}^2}}
$
&
$\ZZ\times(\ZZ_2)^2$
&
{\tiny
$
\left[\begin{array}{ccccc}
2&2&2&1&3
\\
\bar{1}&\bar{1}&\bar{0}&\bar{0}&\bar{0}
\\
\bar{1}&\bar{0}&\bar{0}&\bar{0}&\bar{1}
\end{array}\right]
$
}
&
{\tiny
$
\left[
\begin{array}{c}
6
\\
\bar{0}
\\
\bar{0}
\end{array}
\right]
$
}
&
$6$
&
$9$
\\
\midrule
68
&
$
\frac{\CC[T_{1},T_{2},T_{3},T_{4},S_{1}]}{\bangle{T_{1}T_{2}+T_3^2+T_4^2}}
$
&
$\ZZ\times (\ZZ_{2})^2$
&
{\tiny
$
\left[\begin{array}{ccccc}
3&1&2&2&2
\\
\bar{0}&\bar{0}&\bar{1}&\bar{1}&\bar{0}
\\
\bar{0}&\bar{0}&\bar{0}&\bar{1}&\bar{1}
\end{array}\right]
$
}
&
{\tiny
$
\left[
\begin{array}{c}
6
\\
\bar{0}
\\
\bar{0}
\end{array}
\right]
$
}
&
$6$
&
$9$
\\
\midrule
69
&
$
\frac{\CC[T_1,\ldots,T_5]}{\bangle{T_1T_2+T_3^2+T_4^2+T_5^2}}
$
&
$\ZZ\times(\ZZ_2)^2$&
{\tiny
$\left[\begin{array}{ccccc}
     1&3&2&2&2  
     \\
     \bar 1&\bar 1&\bar 1&\bar 1&\bar 0
     \\
     \bar 0 &\bar 0&\bar 1&\bar 0&\bar 1
\end{array}\right]$
}&
{\tiny
$
\left[
\begin{array}{c}
6
\\
\bar0
\\
\bar0
\end{array}
\right]
$
}
&
$6$
&
$9$
\\
\midrule
70
&
$
\frac{\CC[T_{1},T_{2},T_{3},S_{1},S_{2}]}{\bangle{T_{1}^2+T_{2}^2+T_{3}^2}}
$
&
$\ZZ\times(\ZZ_2)^2$
&
{\tiny
$
\left[\begin{array}{ccccc}
1&1&1&3&2
\\
\bar{1}&\bar{1}&\bar{0}&\bar{1}&\bar{1}
\\
\bar{1}&\bar{0}&\bar{0}&\bar{0}&\bar{1}
\end{array}\right]
$
}
&
{\tiny
$
\left[
\begin{array}{c}
6
\\
\bar{0}
\\
\bar{0}
\end{array}
\right]
$
}
&
$6$
&
$18$
\\
\midrule
71
&
$
\frac{\CC[T_{1},T_{2},T_{3},T_{4},S_{1}]}{\bangle{T_{1}T_{2}+T_3^2+T_4^2}}
$
&
$\ZZ\times (\ZZ_{2})^2$
&
{\tiny
$
\left[\begin{array}{ccccc}
2&4&3&3&2
\\
\bar{1}&\bar{1}&\bar{0}&\bar{0}&\bar{0}
\\
\bar{1}&\bar{1}&\bar{1}&\bar{0}&\bar{1}
\end{array}\right]
$
}
&
{\tiny
$
\left[
\begin{array}{c}
8
\\
\bar{0}
\\
\bar{0}
\end{array}
\right]
$
}
&
$8$
&
$\frac{16}{3}$
\\
\midrule
72
&
$
\frac{\CC[T_{1},T_{2},T_{3},S_{1},S_{2}]}{\bangle{T_{1}^2+T_{2}^2+T_{3}^2}}
$
&
$\ZZ\times\ZZ_2\times\ZZ_4$
&
{\tiny
$
\left[\begin{array}{ccccc}
2&2&2&1&1
\\
\bar{1}&\bar{0}&\bar{1}&\bar{0}&\bar{0}
\\
\bar{1}&\bar{3}&\bar{3}&\bar{1}&\bar{0}
\end{array}\right]
$
}
&
{\tiny
$
\left[
\begin{array}{c}
4
\\
\bar{0}
\\
\bar{2}
\end{array}
\right]
$
}
&
$2$
&
$4$
\\
\midrule
73
&
$
\frac{\CC[T_{1},T_{2},T_{3},S_{1},S_{2}]}{\bangle{T_{1}^2+T_{2}^2+T_{3}^2}}
$
&
$\ZZ\times\ZZ_2\times\ZZ_4$
&
{\tiny
$
\left[\begin{array}{ccccc}
1&1&1&2&1
\\
\bar{0}&\bar{1}&\bar{0}&\bar{1}&\bar{0}
\\
\bar{1}&\bar{3}&\bar{3}&\bar{1}&\bar{0}
\end{array}\right]
$
}
&
{\tiny
$
\left[
\begin{array}{c}
4
\\
\bar{0}
\\
\bar{2}
\end{array}
\right]
$
}
&
$2$
&
$8$
\\
\midrule
74
&
$
\frac{\CC[T_{1},T_{2},T_{3},T_{4},S_{1}]}{\bangle{T_{1}T_{2}+T_3^2+T_4^2}}
$
&
$\ZZ\times \ZZ_{2}\times\ZZ_4$
&
{\tiny
$
\left[\begin{array}{ccccc}
1&1&1&1&2
\\
\bar{0}&\bar{0}&\bar{0}&\bar{1}&\bar{1}
\\
\bar{2}&\bar{0}&\bar{1}&\bar{3}&\bar{0}
\end{array}\right]
$
}
&
{\tiny
$
\left[
\begin{array}{c}
4
\\
\bar{0}
\\
\bar{0}
\end{array}
\right]
$
}
&
$4$
&
$8$
\\
\midrule
75
&
$
\frac{\CC[T_{1},T_{2},T_{3},S_{1},S_{2}]}{\bangle{T_{1}^2+T_{2}^2+T_{3}^2}}
$
&
$\ZZ\times\ZZ_2\times\ZZ_6$
&
{\tiny
$
\left[\begin{array}{ccccc}
1&1&1&1&1
\\
\bar{1}&\bar{0}&\bar{1}&\bar{0}&\bar{0}
\\
\bar{4}&\bar{1}&\bar{1}&\bar{5}&\bar{0}
\end{array}\right]
$
}
&
{\tiny
$
\left[
\begin{array}{c}
3
\\
\bar{0}
\\
\bar{3}
\end{array}
\right]
$
}
&
$3$
&
$\frac{9}{2}$
\\
\midrule
76
&
$
\frac{\CC[T_{1},T_{2},T_{3},T_{4},S_{1}]}{\bangle{T_{1}T_{2}+T_3^2+T_4^2}}
$
&
$\ZZ\times \ZZ_{2}\times\ZZ_6$
&
{\tiny
$
\left[\begin{array}{ccccc}
1&1&1&1&1
\\
\bar{0}&\bar{0}&\bar{1}&\bar{0}&\bar{1}
\\
\bar{2}&\bar{0}&\bar{4}&\bar{1}&\bar{1}
\end{array}\right]
$
}
&
{\tiny
$
\left[
\begin{array}{c}
3
\\
\bar{0}
\\
\bar{0}
\end{array}
\right]
$
}
&
$3$
&
$\frac{9}{2}$
\\
\midrule
77
&
$
\frac{\CC[T_1,\ldots,T_5]}
{\bangle{T_1T_2+T_3^2+T_4^2+T_5^2}}
$
&
$\ZZ\times\ZZ_2\times\ZZ_6$&
{\tiny
$\left[\begin{array}{ccccc}
     1&1&1&1&1  
     \\
     \bar 1&\bar 1&\bar 1&\bar 0&\bar 0
     \\
     \bar 2&\bar 4&\bar 3&\bar 3&\bar 0
\end{array}\right]$
}&
{\tiny
$
\left[
\begin{array}{c}
3
\\
1
\\
0
\end{array}
\right]
$
}
&
$3$
&
$\frac{9}{2}$
\\
\midrule
78
&
$
\frac{\CC[T_{1},T_{2},T_{3},T_{4},T_{5}]}{\bangle{T_{1}T_{2}+T_3T_4+T_5^2}}
$
&
$\ZZ\times (\ZZ_3)^2$
&
{\tiny
$
\left[\begin{array}{ccccc}
1&1&1&1&1\\
\bar{1}&\bar{2}&\bar{1}&\bar{2}&\bar{0}\\
\bar{2}&\bar{1}&\bar{1}&\bar{2}&\bar{0}
\end{array}\right]
$
}
&
{\tiny
$
\left[
\begin{array}{c}
3
\\
\bar{0}
\\
\bar{0}
\end{array}
\right]
$
}
&
$3$
&
$6$
\\
\midrule
79
&
{
$
\frac{\CC[T_1,\ldots,T_4,S_1]}{
     \bangle{T_1^2+T_2^2+T_3^2+T_4^2}}$
}
&
$\ZZ\times(\ZZ_2)^3$&
{\tiny
$\left[\begin{array}{ccccc}
     1&1&1&1&2\\
    \bar 1&\bar 1&\bar 1&\bar 0&\bar 1\\
     \bar 0&\bar 0&\bar 1&\bar 0&\bar 1\\
     \bar 1&\bar 0&\bar 0&\bar 0&\bar 1
\end{array}\right]$
}&
{\tiny
$
\left[
\begin{array}{c}
4\\
\bar 0\\
\bar 0\\
\bar 0
\end{array}
\right]
$
}
&
$4$
&
$8$
\\
\bottomrule
\end{longtable}
}
\end{theorem}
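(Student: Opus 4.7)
The strategy is to reduce the classification to a finite numerical enumeration. Since $X$ is three-dimensional with Picard number one and the Cox ring is cut out by a single quadratic relation, the number of generators must satisfy $r=\dim X+1+\mathrm{rank}(\Cl(X))=5$. Using the automorphisms of $\CC^5$ preserving a quadratic relation, I would first bring $q$ into one of the diagonal normal forms visible in the list, i.e.\ $T_1T_2+T_3T_4+T_5^2$, $T_1T_2+T_3^2+T_4^2+T_5^2$, $T_1T_2+T_3^2+T_4^2$, or $T_1^2+T_2^2+T_3^2$, with any variables not appearing in $q$ relabelled as $S_i$.

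Next I would set up the numerical data. Writing $w_i\in\Cl(X)$ for the degrees of the generators and $\mu=\deg q$, each monomial of $q$ must have degree $\mu$, which yields linear relations among the $w_i$ such as $w_1+w_2=2w_5$. The formula
\[
-\KKK_X \;=\; w_1+\cdots+w_5-\mu
\]
together with the Fano condition forces $-\KKK_X$ to be a strictly positive integer multiple of the ample generator of $\Cl(X)/\Cl(X)_{\mathrm{tors}}\cong\ZZ$; the $\QQ$-factoriality and Picard-number-one conditions further demand that the free parts of the $w_i$ all lie in the same open half-line and that the grading be ``well-formed'' in the sense that no generator becomes superfluous.

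The main technical input is the canonicity hypothesis. Since $X$ is the good geometric quotient $(V(q)\setminus\{0\})\git H$ of a hypersurface singularity in $\CC^5$ by a quasitorus $H$, its singularities can be read off from the non-free $H$-orbits on $V(q)\setminus\{0\}$ together with the singular locus of $V(q)$. Canonical singularities translate into explicit inequalities on the weights $w_i$ modulo $\Cl(X)_{\mathrm{tors}}$, for instance via discrepancy computations on a toric ambient resolution or via the Reid--Tai criterion for cyclic quotient singularities. These inequalities bound both the free parts of the $w_i$ in terms of $\mu$ and the order of $\Cl(X)_{\mathrm{tors}}$.

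Once bounded, a finite case distinction organised by the shape of $q$ and by the isomorphism type of $\Cl(X)_{\mathrm{tors}}$ produces a manageable list of candidate grading matrices, which I would enumerate algorithmically. For each candidate one verifies irredundancy, $\QQ$-factoriality, Picard number one, ampleness of $-\KKK_X$, and finally the canonical condition directly, after which $-\KKK_X^3$ and $q(X)$ can be computed from the data. The main obstacle will be the third step: making the numerical canonicity bounds sharp enough that the enumeration terminates, and then the bookkeeping required to eliminate the residual ambiguity coming from permutations of paired variables of $q$ and from automorphisms of $\Cl(X)$ fixing the anticanonical class, so that exactly one representative per isomorphism class survives in the final list.
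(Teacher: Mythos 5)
Your overall architecture (reduce to five generators, normalize $q$, bound the weights using canonicity, then enumerate finitely many candidates and sieve for irredundancy) matches the paper's in outline, but the crucial step --- the one you yourself flag as ``the main obstacle'' --- is exactly where your proposal has a genuine gap, and the tool you suggest for it would not work. You propose to extract the canonicity inequalities ``via discrepancy computations on a toric ambient resolution or via the Reid--Tai criterion for cyclic quotient singularities.'' Reid--Tai governs quotients of \emph{smooth} points by finite groups, but here the total coordinate space $V(q)$ is itself singular along a positive-dimensional locus (e.g.\ for $q=T_1^2+T_2^2+T_3^2$ in five variables the singular locus of $V(q)$ is two-dimensional and is not removed when passing to the set of semistable points), so the relevant singularities of $X$ are not quotient singularities and Reid--Tai gives no information there. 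The paper's replacement is the anticanonical complex: canonicity is equivalent to the origin being the only interior lattice point of an explicit polytopal complex built from the columns of $P$ and the points $v_\sigma'$ attached to elementary big cones. Finiteness then comes from two concrete facts you do not supply: in the case $\nnn=(1,1,1)$, $m=2$ the relevant slice is a lattice triangle with the origin as unique interior lattice point, hence one of the finitely many reflexive triangles; in the cases $\nnn=(2,1,1)$ and $(2,2,1)$ one embeds a slice into a three-dimensional lattice polytope with a unique interior lattice point and invokes the volume bound of Averkov--Kr\"umpelmann--Nill to cap the entries of $P$. Without an argument of this kind your enumeration has no reason to terminate.

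Two secondary gaps: first, you take the list of normal forms for $q$ from the answer rather than deriving it --- the paper's Lemma~3.1 is needed to rule out binomial quadrics (which would make $X$ toric with polynomial Cox ring, contradicting the singular total coordinate space) and the full five-term quadric $T_1^2+\cdots+T_5^2$ (which forces $P$ to be square, contradicting Picard number one). Second, your irredundancy step is only gestured at; the paper needs a nontrivial battery of invariants (Picard index, dimension of the automorphism group, dimension of the singular locus of the total coordinate space, and dimensions of homogeneous components of the Cox ring in the generator degrees) plus an ad hoc argument for the pair Nos.~55/56, and ``permutations of paired variables and automorphisms of $\Cl(X)$'' alone will not separate, e.g., pairs with identical $\Cl(X)$, Fano index and degree.
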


To prove this result, we make use of the so called anticanonical complex as firstly introduced in~\cite{BecHauHugNic2016}
for Fano varieties with a torus action of complexity one, i.e. the general torus orbit is of codimension one. 
There, the authors have classified all $\QQ$-factorial Fano 3-folds with Picard number one having at most terminal singularities and admitting a torus action of complexity one. 
Note that in our list all varieties defined by a trinomial quadric admit a torus action of complexity one. 
In particular, varieties Nos.~1, 4, 19 and~49 appear in the classification list of~\cite{BecHauHugNic2016}:
variety No.~1 is smooth, the others have terminal singularities.
In~\cite{HisWro2018} the anticanonical complex has been made accessible for a broader class of varieties, i.a. for the intrinsic quadrics. 
There, all $\QQ$-factorial Fano intrinsic quadrics of dimension three having at most canonical singularities and a torus action of complexity two have been classified. 
These show up as Nos.~64, 69, 77 and~79 in our classification list.

\tableofcontents

\section{Background on intrinsic quadrics}
\noindent
In this section we recall the basic facts about intrinsic quadrics from~\cite{FahHau2020} and adapt the methods developed in~\cite{HauHisWro2019,HisWro2018} to prove our main result in the subsequent section.
Our main tool is the {\em Cox ring} $\RRR(X)$, which can be assigned to any normal projective variety $X$ with finitely generated divisor class group $\Cl(X)$
$$
\RRR(X)=\bigoplus\limits_{[D]\in\Cl(X)}\Gamma(X,\OOO_X(D)).
$$ 
We refer to~\cite{ArzDerHauLaf2015} for a precise definition and background on Cox rings.

An {\em intrinsic quadric} is a normal projective variety $X$ with finitely generated divisor class group $\Cl(X)$ and finitely generated Cox ring $\RRR(X)$ admitting homogeneous generators $f_1,\ldots,f_r$ such that the ideal of relations is generated by a single, purely quadratic relation $q$. 
In particular, we have a graded isomorphism 
$$
\RRR(X)\cong \CC[f_1,\ldots,f_r]/\langle q\rangle.
$$

\begin{remark}
Let $X$ be an intrinsic quadric. 
Then, due to~\cite[Prop. 2.5]{FahHau2020}, there is a graded isomorphism
\begin{equation}\label{equation:normalform}
\RRR(X)
\cong
\CC[T_{1},\ldots,T_{n},S_1,\ldots,S_m]
/
\langle
q_{s,t}
\rangle
,
\end{equation}
where $q_{s,t}:=T_1T_2+\ldots+T_{s-1}T_{s}+T_{s+1}^2+\ldots+T_{s+t}^2$.
The polynomial $q_{s,t}$ is called a {\em standard quadric}
and the representation of $\RRR(X)$ in (\ref{equation:normalform}) is called the {\em homogeneous normal form}.
\end{remark}

The homogeneous normal form enables us to work in the flexible language introduced in~\cite{HauHisWro2019}.
We adapt the basic constructions presented there to intrinsic quadrics and recall the major results.

\begin{construction}\label{construction:RnP_0}
Fix integers $r,m \geq 0$ and $n_0, \ldots, n_r > 0$,  such that $2\geq n_0\geq\ldots\geq n_r\geq 1$ holds.
Set $\mathfrak{n}:=(n_0,\ldots,n_r)$, $n := n_0 + \ldots + n_r$ and define an integral $r\times(n+m)$ matrix $P_0$ built up from tuples $l_0,\ldots,l_r$ as follows:
$$
P_{0}
\ := \
\left[
\begin{array}{ccccccc}
-l_{0} & l_{1} &  & 0 & 0  &  \ldots & 0
\\
\vdots & \vdots & \ddots & \vdots & \vdots &  & \vdots
\\
-l_{0} & 0 &  & l_{r} & 0  &  \ldots & 0
\end{array}
\right],\qquad l_i :=
\left\{
\begin{array}{cl}
(1,1)&\text{if }n_i=2\text{ holds},\\
(2)&\text{else}.
\end{array}
\right.
$$
We will write $\CC[T_{ij},S_k]$ for the polynomial ring in the variables $T_{ij}$ and $S_k$, where $0\leq i\leq r,1\leq j\leq n_i$ and $1\leq k\leq m$ holds.
The $l_i$ define a polynomial
$$q=T_{0}^{l_0}+\ldots+T_r^{l_r},\qquad\text{with }T_i^{l_i}:=T_{i1}^{l_{i1}}\cdots T_{in_i}^{l_{in_i}}\in\CC[T_{ij},S_k].$$
Now, let $e_{ij} \in \ZZ^{n}$ 
and $e_k \in \ZZ^{m}$ denote the 
canonical basis vectors
and consider the projection 
$$
Q_0 \colon \ZZ^{n+m} 
\ \rightarrow \ 
K_0 := \ZZ^{n+m} / \mathrm{im}(P_0^*)
$$ 
onto the factor group
by the row lattice of $P_0$.
We define the {\em $K_0$-graded $\CC$-algebra}
$$ 
R(\mathfrak{n},P_0)
\ := \ 
\CC[T_{ij},S_k] / \langle{q}\rangle,
$$
$$
\deg(T_{ij}) :=  Q_0(e_{ij}),
\qquad
\deg(S_k) :=  Q_0(e_k).
$$
\end{construction}

\begin{remark}
Let $R:=R(\nnn,P_0)$ be a $K_0$-graded $\CC$-algebra as in Construction~\ref{construction:RnP_0}.
Then $R$ is integral and normal if $r\geq 2$ holds.
Moreover, the $K_0$-grading is the finest possible grading leaving the variables $T_{ij}$ and $S_k$ and the relation $q$ homogeneous and the defining relation $q$ is a standard quadric.
\end{remark}

\begin{construction}\label{construction:RnP}
Let $R(\nnn,P_0)$ be a $K_0$-graded $\CC$-algebra as in Construction~\ref{construction:RnP_0}.
Choose any integral $s\times (n+m)$ matrix $D$ with $r+s\leq n+m$, such that the stack matrix
$$P:=\left[\begin{array}{c}
P_0\\
D
\end{array}
\right]$$
has pairwise different and primitive columns generating $\QQ^{r+s}$ as a cone.
Now, similar to Construction~\ref{construction:RnP_0}, consider the factor group $K:=\ZZ^{n+m}/\mathrm{im}(P^*)$ and the projection $Q\colon\ZZ^{n+m}\rightarrow K$.
Then, we define the {\em $K$-graded $\CC$-algebra}
$$ 
R(\mathfrak{n},P)
\ := \ 
\CC[T_{ij},S_k] / \langle{q}\rangle,
$$
$$
\deg(T_{ij}) :=  Q(e_{ij}),
\qquad
\deg(S_k) :=  Q(e_k).
$$
\end{construction}

\begin{remark}
Let $R(\nnn,P)$ be a $K$-graded $\CC$-algebra as in Construction~\ref{construction:RnP}. 
Then the natural homomorphism $K_0\mapsto K, [v]\mapsto[v]$ defines a downgrading from the $K_0$-graded $\CC$-algebra $R(\nnn,P_0)$ to the $K$-graded $\CC$-algebra $R(\nnn,P)$.
\end{remark}

\begin{proposition}
Let $X$ be an intrinsic quadric. 
Then the $\Cl(X)$-graded Cox ring $\RRR(X)$ is isomorphic to a $K$-graded $\CC$-algebra $R(\nnn,P)$ as in Construction~\ref{construction:RnP}.
\end{proposition}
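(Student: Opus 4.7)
The plan is to begin from the homogeneous normal form recalled above and translate it into the data $(\mathfrak{n},P)$ of Construction~\ref{construction:RnP}. First I use the isomorphism
$$
\RRR(X) \ \cong \ \CC[T_1,\ldots,T_n,S_1,\ldots,S_m]/\langle q_{s,t}\rangle
$$
and regroup the summands of $q_{s,t}$: the $s/2$ pairs $T_{2k-1}T_{2k}$ and the $t$ squares $T_{s+k}^2$ are collected into $r+1 := s/2 + t$ blocks, producing exponent vectors $l_i\in\{(1,1),(2)\}$ and a relation of the form $q = T_0^{l_0} + \cdots + T_r^{l_r}$. After relabeling, this gives $\mathfrak{n} = (n_0,\ldots,n_r)$ with $n_i=|l_i|$ and the matrix $P_0$ of Construction~\ref{construction:RnP_0}; the $S_1,\ldots,S_m$ play the role of the free variables.

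Next I produce the matrix $D$ from the $\Cl(X)$-grading. The $K_0$-grading attached to $P_0$ is, as noted in the remark after Construction~\ref{construction:RnP_0}, the finest grading on $\CC[T_{ij},S_k]$ leaving each variable and the relation $q$ homogeneous. The $\Cl(X)$-grading on $\RRR(X)$ has exactly these properties, so the universal property of $K_0$ yields a group homomorphism $\varphi\colon K_0 \to \Cl(X)$; this homomorphism is surjective because the $T_{ij},S_k$ generate $\RRR(X)$ and hence their degrees generate $\Cl(X)$. Composing with $Q_0$ gives a projection $Q := \varphi\circ Q_0\colon \ZZ^{n+m} \to \Cl(X) =: K$ whose kernel equals $\mathrm{im}(P^*)$ for an integral matrix $P$ of the shape $\begin{bmatrix}P_0\\ D\end{bmatrix}$ with $s$ additional rows.

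It remains to check that the columns of $P$ are pairwise different, primitive, and positively span $\QQ^{r+s}$, so that the grading so obtained really fits Construction~\ref{construction:RnP}. Using the description of $X$ as a good quotient of $V(q)\setminus\{0\}$ by the characteristic quasitorus $H$ and the general theory of Cox rings, the variables $T_{ij},S_k$ may be identified with pairwise non-associated homogeneous prime generators of $\RRR(X)$, whose vanishing loci descend to pairwise distinct $H$-invariant prime divisors in the ambient toric variety; the columns of $P$ are precisely the primitive ray generators of the corresponding rays in the ambient fan, and hence are primitive and pairwise distinct. The cone-generation property $\mathrm{cone}(\text{columns of }P)=\QQ^{r+s}$ follows from projectivity of $X$ (equivalently, completeness of the ambient fan).

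The main obstacle is the final step: primitivity and pairwise distinctness of the columns of $P$ are not visible from the algebraic normal form by itself, so verifying them requires passing through the geometric picture of the characteristic space and its ambient toric variety, and carefully invoking the Cox ring machinery of \cite{ArzDerHauLaf2015} in the intrinsic-quadric setting.
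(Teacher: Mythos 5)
The paper states this proposition without proof---it is imported from \cite{FahHau2020} and \cite{HauHisWro2019,HisWro2018}---so I can only measure your argument against the strategy of those sources; your overall route (homogeneous normal form $\rightsquigarrow$ $(\mathfrak{n},P_0)$, then a coarsening homomorphism $\varphi\colon K_0\to\Cl(X)$ from the universal property of the finest grading, then Gale duality to produce $P$) is indeed the intended one. The genuine gap is in the middle step, which you assert as if it were automatic: from the surjection $Q=\varphi\circ Q_0$ you claim that $\ker Q=\mathrm{im}(P^*)$ for a stack matrix $P=[P_0;D]$ with $s$ additional rows. This is equivalent to demanding that $\ker Q/\mathrm{im}(P_0^*)=\ker\varphi$ be a \emph{free} abelian group, i.e.\ that $\varphi$ be injective on the torsion part of $K_0$; otherwise $\ker Q$ cannot be generated by the $r$ rows of $P_0$ together with only $s=\mathrm{rk}(\ker Q)-r$ further rows. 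This is not a formal consequence of the universal property. Concretely, $(K_0)_{\mathrm{tors}}$ is an elementary abelian $2$-group generated by the differences of the degrees of the square-variables, and injectivity of $\varphi$ on it is a genuine condition on the $\Cl(X)$-grading: for $q=T_1^2+T_2^2+T_3^2+T_4^2$ graded by $\ZZ\times(\ZZ_2)^2$ with $w_1=(1,\bar 0,\bar 0)$, $w_2=(1,\bar 1,\bar 0)$, $w_3=(1,\bar 0,\bar 1)$, $w_4=(1,\bar 1,\bar 1)$, the relation is homogeneous and the $w_i$ are pairwise distinct, yet $(w_2-w_1)+(w_3-w_1)+(w_4-w_1)=0$, so no matrix $D$ as required exists. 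Ruling such gradings out for actual Cox rings (or reducing them to a finer normal form) is precisely where the content of \cite[Prop.~2.5]{FahHau2020} and the Cox-ring axioms enter, and your write-up does not address it.

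By contrast, the step you single out as the main obstacle is comparatively routine, and your sketch of it is essentially correct: by Gale duality, primitivity of the column $P(e_i)$ is exactly the statement that the remaining $n+m-1$ generator degrees generate $\Cl(X)$ (the ``almost free'' property of Cox-ring gradings), pairwise distinctness of the columns reflects that the generators are pairwise non-associated $\Cl(X)$-primes defining distinct rays, and the cone condition is pointedness of $\mathrm{Eff}(X)$, which follows from projectivity (though not literally from ``completeness of the ambient fan''---$\Sigma(u)$ need not be complete). Two minor points: you reuse the letter $s$ both for the parameter of $q_{s,t}$ and for the number of rows of $D$, which are different integers; and you should note that integrality of $\RRR(X)$ forces at least three terms in $q$, i.e.\ $r\geq 2$, so that the data really fall under Construction~\ref{construction:RnP_0}.
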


We will now use the rings $R(\nnn,P)$ to construct intrinsic quadrics, suitably embedded inside toric varieties; we refer to~\cite{CoxLitSch2011} for background on toric geometry.
For this, let $R:=R(\nnn,P)$ be a $K$-graded $\CC$-algebra from Construction~\ref{construction:RnP} and denote by $\gamma$ the positive orthant $\QQ^{n+m}_{\geq 0}$.
For any face $\gamma_0\preceq \gamma$, we denote by $\gamma_0^*$ its {\em complementary face}, i.e. $\gamma_0^*:=\mathrm{cone}(e_i;\ e_i\not \in \gamma_0)\preceq\gamma$. Moreover, for a homomorphism of finitely generated abelian groups $A\colon K\rightarrow K'$ we denote its unique extension to the $\QQ$ vector spaces $K_\QQ:=K\otimes_\ZZ \QQ$, resp. $K'_\QQ$ as well with $A\colon K_\QQ\rightarrow K'_\QQ$. Finally, we define a polyhedral cone 
$$\mathrm{Mov}(R):=\bigcap\limits_{\gamma_0\preceq\gamma\text{ facet}} Q(\gamma_0)\subseteq K_\QQ.$$

\begin{construction}\label{construction:IQ}
Consider an integral $K$-graded $\CC$-algebra $R:=R(\nnn,P)$ as in Construction~\ref{construction:RnP}. 
Then the $K$-grading on the polynomial ring $\CC[T_{ij},S_k]$ 
defines an action of the quasitorus $H:=\mathrm{Spec}\ \CC[K]$ on $\overline{Z}:=\CC^{n+m}$ that leaves $\overline{X}:=V(q)\subseteq\overline{Z}$ invariant.
Now, choose any element $u$ inside the relative interior $\mathrm{Mov}(R)^\circ$ and define fans
$$\Sigma(u):=\{P(\gamma_0^*);\ \gamma_0\preceq \gamma, u\in Q(\gamma_0)^\circ\},\qquad 
\widehat{\Sigma}(u):=\{\gamma_0\preceq\gamma;\ P(\gamma_0)\in\Sigma(u)\}.$$
This gives rise to the following commutative diagram
$$
\xymatrix{
V(q)
\ar@{}[r]|{=}
&
{\bar{X}}
\ar@{}[r]|\subseteq
\ar@{}[d]|{\rotatebox[origin=c]{90}{$\scriptstyle\subseteq$}}
&
{\bar{Z}}
\ar@{}[r]|{=}
\ar@{}[d]|{\rotatebox[origin=c]{90}{$\scriptstyle\subseteq$}}
&
{\ZZ^{n+m}}
\\
&
{\hat{X}} 
\ar@{}[r]|\subseteq
\ar[d]_{\git H}
& 
{\hat{Z}} 
\ar[d]^{\git H}
&
\\
&
X
\ar@{}[r]|\subseteq
&
Z
}
$$ 
where $Z$ and $\widehat{Z}$ are the toric varieties defined by $\Sigma(u)$ and $\widehat{\Sigma}(u)$ respectively, 
$\hat{Z}\rightarrow Z$ is a toric characteristic space for the quasitorus action of $H$ on $\widehat{Z}$ and $\widehat{X}:=\bar{X}\cap\hat{Z}$. 
The resulting variety $X(\nnn,P,u):=X:=\widehat{X}\git H$ is projective, irreducible and normal with dimension, divisor class group and Cox ring
$$\mathrm{dim}(X)=s+r-1,\qquad 
\Cl(X)=K,\qquad 
\RRR(X)=R(\nnn,P).$$
In particular, the variety $X(\nnn,P,u)$ is an intrinsic quadric with Cox ring $R(\nnn,P)$ in homogeneous normal form.
Note that $\widehat{X}\subseteq\overline{X}$ and $\widehat{Z}\subseteq\overline{Z}$ are precisely the sets of $H$-semistable points with respect to the weight $u$.
\end{construction}

\begin{theorem}
Any intrinsic quadric is isomorphic to a variety $X(\nnn,P,u)$ from Construction~\ref{construction:IQ}.
\end{theorem}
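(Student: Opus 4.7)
The plan is to reduce to the previous Proposition, which identifies the $\Cl(X)$-graded Cox ring of an intrinsic quadric $X$ with a ring of the shape $R(\nnn,P)$, and then to invoke the standard reconstruction of a projective variety with finitely generated Cox ring as a GIT quotient of the total coordinate space. In this way, only the choice of the stability parameter $u$ needs to be made precise.

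More concretely, given an intrinsic quadric $X$, I would first apply the previous Proposition to obtain data $\nnn$ and $P$, together with a graded isomorphism $\RRR(X)\cong R(\nnn,P)$, under which the grading group $K=\ZZ^{n+m}/\mathrm{im}(P^*)$ is identified with the divisor class group $\Cl(X)$ and the variables $T_{ij},S_k$ correspond to the $\Cl(X)$-homogeneous generators of $\RRR(X)$. Since $X$ is projective and has a finitely generated Cox ring, I can next pick an ample class $u\in\Cl(X)_\QQ=K_\QQ$. A standard fact about Mori dream spaces is that the ample cone of $X$ lies inside the interior of the moving cone $\mathrm{Mov}(R)^\circ$, so this $u$ is a valid input for Construction~\ref{construction:IQ}.

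With these choices in hand, I would compare $X$ and $X(\nnn,P,u)$ directly as good quotients. By design, the open subset $\widehat{X}\subseteq\overline{X}=V(q)$ appearing in Construction~\ref{construction:IQ} is precisely the set of $H$-semistable points in $\overline{X}$ with respect to the linearization of $u$, and $\widehat{X}\rightarrow X(\nnn,P,u)$ is a good quotient for the $H$-action. On the other hand, by the Cox ring reconstruction for projective varieties with finitely generated Cox ring (see~\cite{ArzDerHauLaf2015}), the variety $X$ itself arises as the good quotient of the $H$-semistable locus inside $\mathrm{Spec}\,\RRR(X)=\overline{X}$ with respect to the same ample class $u$. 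Comparing these two good quotients under the graded isomorphism $\RRR(X)\cong R(\nnn,P)$ then yields the desired isomorphism $X\cong X(\nnn,P,u)$.

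The main obstacle is the verification that the combinatorial description of the semistable locus via the fan $\widehat{\Sigma}(u)$ in Construction~\ref{construction:IQ} actually coincides, after intersecting with $\overline{X}$, with the GIT-theoretic $H$-semistable locus determined by $u$, and dually that the ample cone of $X$ lies in $\mathrm{Mov}(R)^\circ$. Both points are standard in the theory of GIT quotients of affine toric $H$-varieties and Mori dream spaces, and for the present setup they are available in~\cite{HauHisWro2019}; once these ingredients are in place, matching the two quotient descriptions is a formality.
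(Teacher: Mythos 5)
Your argument is correct and is essentially the canonical one; note that the paper itself states this theorem without proof, recalling it from \cite{FahHau2020} and \cite{HauHisWro2019}. The two technical points you isolate (that an ample class lies in $\mathrm{Mov}(R)^\circ$ and that $\widehat{X}$ is exactly the $H$-semistable locus for $u$) are precisely the facts already recorded in Construction~\ref{construction:IQ}, so combining the preceding Proposition with the Cox ring reconstruction of~\cite{ArzDerHauLaf2015} as you do gives the statement.
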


By construction any intrinsic quadric $X=X(\nnn,P,u)$ comes embedded inside a toric variety $Z$ defined by a fan $\Sigma(u)$.
We turn to the description of the cones $\sigma\in\Sigma(u)$ defining torus orbits $\TT^{r+s}\cdot z_\sigma$ in $Z$, that intersect $X$ non-trivially:
Let us denote the columns of $P$ by $v_{ij}:=P(e_{ij})$ and $v_k:=P(e_k)$ respectively.
We call a cone $\sigma\in\Sigma(u)$ {\em big (elementary big)}, if its set of primitive ray generators contains for every $i=0,\ldots,r$ at least (precisely) one of the vectors $v_{ij}$.
Moreover, we call $\sigma$ a {\em leaf cone}, if there exists a set of indices $I_\sigma:=\{i_1,\ldots i_{r-1}\}$ 
such that, whenever $v_{ij}$ is a primitive ray generator of $\sigma$, then $i\in I_\sigma$ holds.
Finally, we call a face $\gamma_0\preceq\gamma$ an {\em $X$-face}, 
if the torus orbit $\TT^{n+m}\cdot z_{\gamma_0^*}\subseteq\CC^{n+m}$ defined by the complementary face of $\gamma_0$ intersects $\overline{X}\subseteq\CC^{n+m}$ non-trivially and $P(\gamma_0^*)\in\Sigma(u)$ holds.

\begin{proposition}
Let $X=X(\nnn,P,u)$ be an intrinsic quadric.
Then for any cone $\sigma\in\Sigma(u)$, the following statements are equivalent:
\begin{enumerate}
    \item The torus orbit defined by $\sigma$ intersects $X$ non-trivially.
    \item We have $\sigma=P(\gamma_0^*)$ for an $X$-face $\gamma_0\preceq\gamma$.
    \item The cone $\sigma$ is a big cone or a leaf cone.
\end{enumerate}
\end{proposition}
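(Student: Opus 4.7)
The strategy is to establish (i) $\Leftrightarrow$ (ii) from the GIT-quotient construction of $X$, and (ii) $\Leftrightarrow$ (iii) by a direct combinatorial analysis of the standard quadric $q$.

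For (i) $\Leftrightarrow$ (ii): every $\sigma \in \Sigma(u)$ is, by construction, of the form $\sigma = P(\gamma_0^*)$ for some face $\gamma_0 \preceq \gamma$ with $u \in Q(\gamma_0)^\circ$, and the torus orbit $\TT^{r+s} \cdot z_\sigma \subseteq Z$ is the image of $\TT^{n+m} \cdot z_{\gamma_0^*} \subseteq \hat Z$ under the good quotient $\hat Z \to Z$. Hence the downstairs orbit meets $X = \hat X \git H$ if and only if the upstairs orbit meets $\hat X = \bar X \cap \hat Z$. Since $\TT^{n+m} \cdot z_{\gamma_0^*} \subseteq \hat Z$ is automatic once $P(\gamma_0^*) \in \Sigma(u)$, this reduces to $\TT^{n+m} \cdot z_{\gamma_0^*} \cap \bar X \neq \emptyset$, which is exactly the defining condition of an $X$-face.

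For (ii) $\Leftrightarrow$ (iii): the orbit $\TT^{n+m} \cdot z_{\gamma_0^*}$ consists of points in $\CC^{n+m}$ whose non-zero coordinates are precisely those indexed by generators of $\gamma_0$. Restricting $q = T_0^{l_0} + \ldots + T_r^{l_r}$ to this orbit retains exactly those monomials $T_i^{l_i}$ all of whose factors $T_{i1}, \ldots, T_{in_i}$ correspond to generators of $\gamma_0$; write $I(\gamma_0) \subseteq \{0, \ldots, r\}$ for the index set of such surviving monomials. Because distinct $T_i^{l_i}$ involve pairwise disjoint variables, the orbit meets $\bar X$ if and only if $|I(\gamma_0)| \neq 1$: when $I(\gamma_0)$ is empty the whole orbit is contained in $\bar X$; when it has at least two elements, one may independently rescale the surviving monomials so as to produce a non-trivial cancellation; and when it is a singleton, the restriction is a single monomial which is non-vanishing on the orbit.

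It then remains to match this trichotomy with the big/leaf dichotomy. An index $i$ lies in $I(\gamma_0)$ iff all of $e_{i1},\ldots,e_{in_i}$ belong to $\gamma_0$, equivalently none of $v_{i1}, \ldots, v_{in_i}$ appears among the primitive ray generators of $\sigma = P(\gamma_0^*)$. Thus $I(\gamma_0) = \emptyset$ translates into ``every $i$ contributes a ray to $\sigma$'', i.e.\ $\sigma$ is big, and $|I(\gamma_0)| \geq 2$ translates into ``at most $r-1$ indices $i$ contribute a ray to $\sigma$'', i.e.\ $\sigma$ is a leaf cone; the case $|I(\gamma_0)| = 1$ is precisely the one where $\sigma$ is neither big nor a leaf cone. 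The main technical point will be the matching between generators of $\gamma_0^*$ and primitive ray generators of $\sigma$, which is settled by restricting attention to $P$-faces where $P$ is injective on rays, following the framework developed in~\cite{HauHisWro2019,HisWro2018}.
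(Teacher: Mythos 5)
The paper itself offers no proof of this proposition: it is stated as recalled background, adapted from \cite{FahHau2020} and \cite{HauHisWro2019,HisWro2018}, so there is no in-text argument to compare against. Your proposal supplies the standard argument and it is essentially correct: the reduction of (i) to (ii) via the good quotient $\hat Z\to Z$ is right (with the small caveat that a cone $\sigma$ may equal $P(\gamma_0^*)$ for several faces $\gamma_0$, so the preimage of the orbit of $\sigma$ is a union of orbits upstairs; since both (i) and (ii) are existential over $\gamma_0$ this does no harm, but your phrasing treats a single $\gamma_0$ and should acknowledge this), and the trichotomy on $|I(\gamma_0)|$ correctly exploits that the monomials $T_i^{l_i}$ of the standard quadric have pairwise disjoint variable sets, each surjecting onto $\CC^*$, so that cancellation is possible exactly when at least two monomials survive. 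The one genuine gap is the point you yourself flag and defer: the translation between ``$e_{ij}\in\gamma_0^*$'' and ``$v_{ij}$ is a primitive ray generator of $\sigma=P(\gamma_0^*)$'' is what the definitions of big and leaf cones are phrased in, and without knowing that the rays of $P(\gamma_0^*)$ are precisely the $P(e_{ij})$, $P(e_k)$ with $e_{ij},e_k\in\gamma_0^*$ (a property of the minimal toric ambient variety in the bunched-ring framework, not of an arbitrary linear image of a simplicial cone), the equivalence of (ii) and (iii) could fail in one direction. Since the paper works throughout with the canonical embedding of Construction~\ref{construction:IQ}, where this matching is part of the setup, your deferral is acceptable, but a complete proof should state and justify that fact rather than only announce it as ``the main technical point.''
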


\begin{remark}
Let $X=X(n,P,u)$ be a $\QQ$-factorial intrinsic quadric with Picard number $\varrho(X)=1$. Then every face $0\neq\gamma_0\preceq\gamma$ that defines a torus orbit $\TT^{n+m}\cdot z_{\gamma_0^*}\subseteq \CC^{n+m}$ intersecting $\overline{X}$ non-trivially is an $X$-face.
In particular, if all entries of $\nnn$ equals one and $m>0$ holds, then we have precisely one elementary big cone in $\Sigma(u)$.
Moreover, if  $\nnn$ contains an index $n_i=2$, we obtain at least two elementary big cones in $\Sigma(u)$.
\end{remark}

We turn to the description of the various cones of divisor classes inside the rational divisor class group of an intrinsic quadric.

\begin{remark}
Let $X=X(\nnn,P,u)$ be an intrinsic quadric. 
Then the cones of effective, movable, semiample and ample divisor classes inside $\Cl(X)_\QQ=K_\QQ$ are given as
$$
\mathrm{Eff}(X)=Q(\gamma),\qquad
\mathrm{Mov}(X)=\bigcap\limits_{{\tiny
\begin{array}{c}
\gamma_0\preceq\gamma\\
\text{facet}
\end{array}}} 
Q(\gamma_0)
$$
$$
\mathrm{SAmple}(X)=\bigcap\limits_{{\tiny
\begin{array}{c}
\gamma_0\preceq\gamma\\ 
X\text{-face}
\end{array}}}
Q(\gamma_0),\qquad
\mathrm{Ample}(X)=\bigcap\limits_{{\tiny
\begin{array}{c}
\gamma_0\preceq\gamma\\ 
X\text{-face}
\end{array}}}
Q(\gamma_0)^\circ.
$$
Note, that due to the projectivity of $X$, the effective cone $\mathrm{Eff}(X)$ is pointed.
\end{remark}

\begin{remark}
Let $X=X(\nnn,P,u)$ be an intrinsic quadric, then $u\in\mathrm{Ample}(X)$ holds. 
Moreover, let $u\not = u'\in\mathrm{Ample}(X)$. 
Then $X(\nnn,P,u)=X(\nnn,P,u')$ holds.
\end{remark}

We turn to the explicit description of the anticanonical divisor class of an intrinsic quadric and connected with it, its Fano property. 
For this, let $R:=R(\nnn,P)$ be a $K$-graded $\CC$-algebra as in Construction~\ref{construction:RnP}. 
We  set $$-\kappa(R):=\sum\mathrm{deg}(T_{ij})+\sum \mathrm{deg}(S_k)-\deg(q)\in K.$$

\begin{proposition}
Let $X:=X(\nnn,P,u)$ be an intrinsic quadric with Cox ring $R:=R(\nnn,P)$. 
Then its anticanonical divisor class is given by $-\KKK_X=-\kappa(R)$.
In particular, if $R:=R(\nnn,P)$ is a $K$-graded $\CC$-algebra as in Construction~\ref{construction:RnP} with $-\kappa(R)\in\mathrm{Mov}(R)^\circ$. 
Then the intrinsic quadric $X(\nnn,P,-\kappa(R))$ is Fano.
\end{proposition}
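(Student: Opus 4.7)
The plan is to derive $-\KKK_X = -\kappa(R)$ by applying adjunction to the hypersurface embedding $X \hookrightarrow Z$ into the ambient toric variety produced by Construction~\ref{construction:IQ}, and then to obtain the Fano statement by taking the GIT parameter $u$ to be $-\kappa(R)$ itself.

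For the first part, I would unfold Construction~\ref{construction:IQ}: the intrinsic quadric $X = V(q) \cap Z$ sits inside the toric variety $Z$ with fan $\Sigma(u)$, whose rays are exactly the columns of $P$ and thus correspond to the variables $T_{ij}$ and $S_k$. Since the Cox ring of $Z$ is $\CC[T_{ij},S_k]$ with the very same $K$-grading as in Construction~\ref{construction:RnP}, and $\Cl(Z) = K = \Cl(X)$ under this identification, the standard toric canonical divisor formula gives
\[
-\KKK_Z \;=\; \sum_{i,j}\deg(T_{ij}) + \sum_k \deg(S_k) \ \in \ \Cl(Z).
\]
Because $R(\nnn,P)$ is integral (for $r\geq 2$) and $q$ is $K$-homogeneous, the zero locus $V(q)$ descends to a prime Weil divisor on $Z$ whose class in $\Cl(Z)$ is precisely $\deg(q)$. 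Applying adjunction for the hypersurface $X \subset Z$ then yields
\[
-\KKK_X \;=\; \bigl(-\KKK_Z - [X]\bigr)\big|_X \;=\; \sum_{i,j}\deg(T_{ij}) + \sum_k \deg(S_k) - \deg(q) \;=\; -\kappa(R).
\]

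For the Fano part, suppose $-\kappa(R) \in \mathrm{Mov}(R)^\circ$ and set $u := -\kappa(R)$ in Construction~\ref{construction:IQ}; this produces an intrinsic quadric $X := X(\nnn,P,u)$. The remark preceding this proposition gives $u \in \mathrm{Ample}(X)$, and by the formula just derived $-\KKK_X = -\kappa(R) = u$. Hence $-\KKK_X$ is ample and $X$ is Fano.

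The main obstacle is making the adjunction step rigorous on the potentially singular toric variety $Z$: one must verify that $X$ is a prime Weil divisor of $Z$, that $[V(q)] = \deg(q) \in \Cl(Z)$ under the Cox ring identification, and that the toric canonical formula $-\KKK_Z = \sum [D_\rho]$ holds as a Weil-divisor statement in this merely $\QQ$-factorial setting. All three facts are standard consequences of the Cox ring formalism (see~\cite{ArzDerHauLaf2015}) and are already worked out in the broader complete-intersection framework of~\cite{HauHisWro2019,HisWro2018}, which encompasses intrinsic quadrics as a special case.
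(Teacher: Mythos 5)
Your argument is correct and is essentially the expected one: the paper states this proposition without proof, as recalled background, and the sources it draws on (\cite{ArzDerHauLaf2015}, \cite{HauHisWro2019}, \cite{HisWro2018}) establish it precisely by the adjunction computation through the ambient toric variety $Z$ that you describe, combined with the preceding remark that the GIT parameter $u$ always lies in $\mathrm{Ample}(X)$, so that choosing $u=-\kappa(R)$ makes $-\KKK_X$ ample. The caveats you flag at the end (primality of $V(q)$ in $Z$, the identity $[X]=\deg(q)$ in $\Cl(Z)=K$, and the validity of the toric canonical class formula in the merely $\QQ$-factorial setting) are exactly the points taken care of by the Cox-ring formalism in those references, so nothing essential is missing.
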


We turn to singularity types of Fano varieties.
For this, let $X$ be an arbitrary Fano variety and $\pi\colon X'\rightarrow X$ a resolution of singularities, i.e. $\pi$ is  proper and birational and $X'$ is smooth. 
Then, due to the ramification formula, we have 
$$k_{X'} = \pi^*k_X+\sum a_i E_i,$$
where the $E_i$ are prime divisors located in the exceptional locus $\mathrm{Exc}(\pi)$ and the $a_i$ are rational numbers, the so called {\em discrepancies}.
Note that the discrepancies of a Fano variety are independent of the chosen resolution of singularities. 
We call $X$ {\em terminal (canonical, log-terminal)} if all discrepancies $a_i$ are strictly positive (non-negative, strictly greater then $-1$).
Our main tool to characterize these singularity types is the anticanonical complex as introduced in~\cite{BecHauHugNic2016} and developed further in~\cite{HisWro2018}. 
We recall the necessary definitions and results from~\cite{HisWro2018}.
For this, let $X=X(\nnn,P,u)$ be a Fano intrinsic quadric. 
Then, by construction, $X$ is embedded inside a toric variety $Z$. Intersecting $X$ with the open torus $\TT^{r+s}\subseteq Z$, we obtain its {\em tropical variety} as the support of the quasifan
$$\mathrm{trop}(X\cap\TT^{r+s})= |\Sigma_{\PP_r}^{\leq r-1}\times\QQ^s|,$$
where the first factor is the $(r-1)$-skeleton of the standard fan of the $r$-dimensional projective space with primitive ray generators $e_1,\ldots,e_r\in\CC^r$ and $e_0:=-\sum e_i$.
We denote the tropical variety of $X$ with $\mathrm{trop}(X)$ and call its maximal linear subspace the {\em lineality space} $\mathrm{trop}(X)^\mathrm{lin}:=\{0\}\times\QQ^s$.

\begin{construction}
Let $X=X(\nnn,P,u)$ be a Fano intrinsic quadric. 
For every elementary big cone $\sigma=\mathrm{cone}(v_{0j_0},\ldots,v_{rj_r})\in\Sigma(u)$
define numbers
$$
\ell_{\sigma,i} 
\ := \ 
\frac{l_{0j_0} \cdots l_{rj_r}}{l_{ij_i}}
\text{ for } i = 0, \ldots, r
\quad
\text{and}
\quad
\ell_{\sigma}
\ := \ 
 \sum_{i=0}^r \ell_{\sigma, i}-(l_{0j_0} \cdots l_{rj_r}).
$$
\end{construction}

\begin{theorem}
Let $X=X(\nnn,P,u)$ be a Fano intrinsic quadric. Then $X$ is log-terminal if and only if $l_\sigma>0$ holds for all elementary big cones $\sigma\in\Sigma(u)$.
\end{theorem}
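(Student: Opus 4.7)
The plan is to invoke the anticanonical complex $\mathcal{A}_X\subseteq\mathrm{trop}(X)$ associated to a Fano intrinsic quadric, as developed in~\cite{HisWro2018}. Its defining property is that the discrepancies appearing in any toric resolution can be read off from where the exceptional rays meet $\mathcal{A}_X$; in particular $X$ is log-terminal if and only if $\mathcal{A}_X$ is bounded modulo the lineality space $\mathrm{trop}(X)^{\mathrm{lin}}=\{0\}\times\QQ^s$. This reduces the theorem to a cone-by-cone boundedness analysis of $\mathcal{A}_X$ in the fan $\Sigma(u)$.

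First I would decompose $\mathcal{A}_X$ according to the cones $\sigma\in\Sigma(u)$ that meet $X$; by the earlier proposition these are precisely the big cones and the leaf cones. Over a leaf cone $\sigma$ the primitive generators $v_{ij}$ of $\sigma$ involve at most $r-1$ of the index blocks $i$, so the piece $\mathcal{A}_X\cap\sigma$ is essentially a product of a bounded toric-type polytope with the lineality direction, hence automatically bounded modulo $\mathrm{trop}(X)^{\mathrm{lin}}$. Analogously, any non-elementary big cone contains elementary big subcones of the same dimensional profile, and boundedness over the former is controlled by the latter. Consequently the only possible obstruction to log-terminality comes from the elementary big cones.

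The core step is the analysis over an elementary big cone $\sigma=\mathrm{cone}(v_{0j_0},\ldots,v_{rj_r})$. One writes the distinguished vertex of $\mathcal{A}_X\cap\sigma$ sticking into the lineality direction in the basis $v_{0j_0},\ldots,v_{rj_r}$, and uses the explicit description $-\KKK_X=-\kappa(R)$ together with the pairing of $-\KKK_X$ with $v_{ij_i}$ via the exponent $l_{ij_i}$. A direct computation then shows that this vertex has barycentric coefficients $\ell_{\sigma,i}/(l_{0j_0}\cdots l_{rj_r})$, and the condition that it lie at finite height in $\mathrm{trop}(X)/\mathrm{trop}(X)^{\mathrm{lin}}$ becomes the Platonic-type inequality
$$
\frac{1}{l_{0j_0}}+\cdots+\frac{1}{l_{rj_r}}\;>\;1,
$$
which, multiplied by $l_{0j_0}\cdots l_{rj_r}$, is precisely $\ell_\sigma>0$.

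The main obstacle will be the elementary big case: pinning down the description of $\mathcal{A}_X\cap\sigma$ from the general construction in~\cite{HisWro2018} and verifying that its extremal vertex has exactly the coordinates encoded by $\ell_{\sigma,i}$, so that the Platonic inequality surfaces with the correct normalization. Once this identification is made, the leaf-cone boundedness is automatic, the big-cone case reduces to the elementary big case, and log-terminality of $X$ is equivalent to $\ell_\sigma>0$ on every elementary big cone $\sigma\in\Sigma(u)$, as claimed.
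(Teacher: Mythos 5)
First, a caveat: the paper does not prove this statement at all --- it is recalled verbatim from \cite{HisWro2018}, so there is no in-paper proof to compare against. Your outline does follow the architecture of the source: reduce log-terminality to a boundedness property of the anticanonical complex, check it cone by cone, observe that leaf cones and non-elementary big cones are harmless, and extract from each elementary big cone the Platonic-type inequality $\sum_i l_{ij_i}^{-1}>1$. Your final identification of that inequality with $\ell_\sigma>0$ is correct, since $\ell_\sigma=(l_{0j_0}\cdots l_{rj_r})\bigl(\sum_i l_{ij_i}^{-1}-1\bigr)$.

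However, as written your boundedness criterion is inverted, and this breaks the argument. The distinguished vertex $v_\sigma'=v_\sigma/\ell_\sigma$ of the piece over an elementary big cone lies \emph{inside} the lineality space $\mathrm{trop}(X)^{\mathrm{lin}}=\{0\}\times\QQ^s$ (note $v_\sigma\in\{0\}\times\QQ^s$ by construction), so the failure mode when $\ell_\sigma\le 0$ is unboundedness \emph{along} the lineality space, in the direction of $v_\sigma$. By contrast, ``bounded modulo $\mathrm{trop}(X)^{\mathrm{lin}}$'', i.e.\ boundedness of the image in $\mathrm{trop}(X)/\mathrm{trop}(X)^{\mathrm{lin}}$, holds automatically: the leaf-cone pieces are outright bounded simplices (not products with the lineality direction, as you claim), and the only possible recession direction of a big-cone piece is $v_\sigma$ itself, which dies in the quotient. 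So your stated criterion would declare every Fano intrinsic quadric log-terminal. The correct statement is that $X$ is log-terminal iff the complex is genuinely bounded, and ``$v_\sigma'$ lies at finite height'' must be read inside $\mathrm{trop}(X)^{\mathrm{lin}}$, not in the quotient by it. Two further points to tighten: (a) the construction of $\AAA$ in this paper presupposes log-terminality, so to avoid circularity you must work with the a priori, possibly unbounded, polyhedral version from \cite{HisWro2018}, where the piece over $\sigma$ is cut out by a linear form and has nontrivial recession cone exactly when $\ell_\sigma\le 0$; (b) the reduction from arbitrary big cones to elementary big ones is a genuine lemma in the source, not an immediate consequence of containment of subcones, and needs a citation or an argument.
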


\begin{construction}
Let $X=X(\nnn,P,u)$ be a log-terminal Fano intrinsic quadric. For every elementary big cone $\sigma=\mathrm{cone}(v_{0j_0},\ldots,v_{rj_r})\in\Sigma(u)$,
define points inside the lineality space $\mathrm{trop}(X)^\mathrm{lin}$:
$$ 
v_\sigma
\ := \ 
\ell_{\sigma,0} v_{0j_0} + \ldots +  \ell_{\sigma,r} v_{rj_r}
\ \in \ 
\ZZ^{r+s}
\quad
\text{and}
\quad
v_\sigma':=\frac{v_\sigma}{\ell_\sigma}\in\QQ^{r+s}
$$
Then, $v_\sigma'\in\sigma$ holds.
Now, the {\em anticanonical complex} $\AAA$ of $X$ is defined as the polytopal complex obtained as the intersection of the convex hull over the primitive ray generators of $\Sigma(u)$ and the $v_\sigma'$, where $\sigma\in\Sigma(u)$ is elementary big, with the tropical variety $\mathrm{trop}(X)$.
\end{construction}

\begin{remark}
Let $X=X(\nnn,P,u)$ be a log-terminal Fano intrinsic quadric. Then $0\in\AAA^\circ$ holds.
\end{remark}

\begin{theorem}
Let $X(\nnn,P,u)$ be a log-terminal Fano intrinsic quadric.
Then the following holds:
\begin{enumerate}
    \item $X$ is terminal, if and only if the only lattice points of the anticanonical complex are the primitive ray generators of $\Sigma(u)$ and the origin.
    \item $X$ is canonical, if and only if the only interior lattice point of the anticanonical complex is the origin.
\end{enumerate}
\end{theorem}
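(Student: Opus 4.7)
The plan is to reduce both equivalences to a discrepancy computation along toric modifications of the ambient variety $Z = Z(\Sigma(u))$. The key observation is that every primitive lattice vector $v$ in $|\Sigma(u)| \cap \mathrm{trop}(X)$ defines, after a suitable iterated star subdivision of $\Sigma(u)$, an exceptional prime toric divisor $E_v$ on a modification $\tilde Z \to Z$; its intersection with the proper transform $\tilde X$ of $X$ cuts out a prime divisor $D_v$ on $\tilde X$ whose discrepancy $a(v)$ can be read off the combinatorial data $(\nnn,P,u)$. By iterating such subdivisions one produces a resolution $\tilde X \to X$, so checking terminality or canonicity amounts to checking positivity or non-negativity of $a(v)$ for all such primitive vectors. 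Lattice points outside $\mathrm{trop}(X)$ give exceptional loci of codimension $\geq 2$ in $\tilde X$ and are therefore irrelevant for the discrepancy analysis.

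The heart of the proof is a discrepancy formula for $a(v)$ obtained by splitting cases according to the cone $\sigma \in \Sigma(u)$ containing $v$. If $\sigma$ is a leaf cone, so $v$ lies essentially inside the lineality space $\mathrm{trop}(X)^{\mathrm{lin}}$, then the toric modification meets $X$ transversely and the standard toric discrepancy formula applies: the sign of $a(v)$ is controlled by the position of $v$ relative to the slab cut out of $\sigma$ by the anticanonical class of $Z$. If $\sigma = \mathrm{cone}(v_{0j_0},\ldots,v_{rj_r})$ is elementary big, then $X$ carries a non-trivial multiplicity along the corresponding torus orbit coming from the quadric relation $q$. A direct computation of the order of vanishing of $q$ along $E_v$ produces the correction factor $\ell_\sigma$, and one finds that $a(v) \geq 0$ (respectively $a(v) > 0$) precisely when $v$ lies outside (respectively strictly outside, or equals one of the $v_{ij}$) the convex hull of $v_{0j_0},\ldots,v_{rj_r}$ together with the apex $v_\sigma' = v_\sigma/\ell_\sigma$.

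Gluing these local polytopes along $\mathrm{trop}(X)$ is exactly the construction of the anticanonical complex $\AAA$: the leaf-cone slabs and the elementary-big caps assemble into $\AAA$ by definition. Consequently, $a(v) \geq 0$ is equivalent to $v \notin \AAA^\circ$, and $a(v) > 0$ is equivalent to $v \notin \AAA$ or $v$ being a primitive ray generator of $\Sigma(u)$. The two equivalences of the theorem are then immediate: $X$ is canonical iff every $a(v)\geq 0$ iff the only lattice point in $\AAA^\circ$ is the origin, and $X$ is terminal iff every $a(v)>0$ iff the only lattice points of $\AAA$ are the primitive ray generators of $\Sigma(u)$ and the origin.

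The main obstacle is the discrepancy computation in the elementary big case. The invariants $\ell_{\sigma,i}$, $\ell_\sigma$ and the apex $v_\sigma'$ are engineered precisely to convert the multiplicity of the quadric $q$ along the exceptional divisor into a polytopal constraint, and getting this translation right requires carefully tracking how $q$ pulls back under the star subdivision adding the ray $\mathrm{cone}(v)$. Once this calculation is in place, the rest of the argument is essentially combinatorial bookkeeping about lattice points in $\AAA$.
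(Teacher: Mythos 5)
First, a point of reference: the paper does not prove this theorem at all---it is recalled verbatim from \cite{HisWro2018} (building on \cite{BecHauHugNic2016}) in the background section, so there is no in-paper argument to compare against. Your outline does reproduce the strategy of those sources: pass to toric modifications of the ambient variety $Z$, attach to each primitive lattice vector $v$ on $\mathrm{trop}(X)\cap|\Sigma(u)|$ an exceptional divisor with a computable discrepancy $a(v)$, split into the leaf-cone case (transverse intersection, standard toric formula) and the elementary big case (where the multiplicity of $q$ produces the correction $\ell_\sigma$ and the apex $v_\sigma'$), and observe that the resulting local polytopes glue to $\AAA$.

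That said, as a proof the proposal has real gaps rather than omitted routine detail. The central one is the discrepancy formula itself: the assertion that in the elementary big case $a(v)\geq 0$ (resp.\ $>0$) exactly when $v$ lies outside (resp.\ off) the cap with apex $v_\sigma'$ \emph{is} the theorem, and ``a direct computation of the order of vanishing of $q$ along $E_v$'' is named but never carried out; you acknowledge this yourself, but without it nothing is proved. Second, the reduction step is incomplete in both directions: for the ``if'' direction you must actually exhibit a resolution of $X$ obtained from toric subdivisions (a weak tropical resolution followed by a toric resolution of the ambient fan) and then invoke the standard fact that canonicity/terminality can be tested on the exceptional divisors of a single resolution---including the divisors introduced by the auxiliary subdivisions that do not correspond to lattice points of $\AAA$, whose discrepancies must be controlled by a monotonicity argument rather than dismissed. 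The claim that lattice points outside $\mathrm{trop}(X)$ only produce codimension~$\geq 2$ loci also needs justification (it is where the description of $X\cap\TT^{r+s}$ via big and leaf cones enters). Finally, a small but conceptually relevant slip: the primitive ray generators $v_{ij}$, $v_k$ do not give \emph{exceptional} divisors over $X$ at all, so they are excluded from the discrepancy test for that reason, not because $a(v)>0$ there; this is precisely why they are permitted as lattice points of $\AAA$ in the terminality criterion.
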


\section{Proof of Theorem~\ref{theorem:classification}}
\noindent
This section is dedicated to the proof of Theorem~\ref{theorem:classification}.
In a first step, we show that in our situation any intrinsic quadric $X(\nnn,P,u)$ is defined via a trinomial or a quadrinomial relation $q$ in its Cox ring $R(\nnn,P)$. 
Note that the quadrinomial case is part of~\cite{HisWro2018}, where torus actions on singular varieties are investigated, see Remark~\ref{remark:quadrinomials}.
Therefore, we turn to the trinomial case and go through any possible configuration for the defining data $\nnn$ and $P$ to create the classification list. 
Finally, we prove that all of the varieties stated in Theorem~\ref{theorem:classification} are pairwise non-isomorphic.

\begin{lemma}\label{lemma:trinomialsquadrinomials}
Let $X=X(\nnn,P,u)$ be a $\QQ$-factorial Fano intrinsic quadric of dimension three and Picard number one. 
Then $q$ is either a trinomial or a quadrinomial.
\end{lemma}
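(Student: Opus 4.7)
My plan is a case analysis on the integer $r$, where $r+1$ equals the number of terms of $q$. The formulas $\dim X = r + s - 1$ and $\rho(X) = n + m - (r+s)$ from Construction~\ref{construction:IQ} translate the hypotheses into the two numerical constraints $r + s = 4$ and $n + m = 5$. Thus the lemma reduces to excluding $r \in \{0, 1, 4\}$.

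I would handle $r = 4$ first, since it admits a direct contradiction from the matrix data. Then $s = 0$, so $P = P_0$ carries no $D$-block; and since there are $r + 1 = 5$ indices with each $n_i \geq 1$, the constraint $n + m = 5$ forces $n = 5$, $m = 0$, and $n_i = 1$ for every $i$, whence $l_i = (2)$ throughout. The columns of the resulting $4 \times 5$ matrix $P_0$ are then $(-2,-2,-2,-2)^T$ and $2 e_1, \ldots, 2 e_4$, all of gcd $2$, violating the primitivity condition required in Construction~\ref{construction:RnP}. So $r = 4$ is impossible.

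For $r \leq 1$ I would proceed by inspection of the shape of $q$. When $r = 0$, $q = T_0^{l_0}$ is a single monomial, making $R(\nnn, P)$ either non-reduced (if $l_0 = (2)$) or reducible (if $l_0 = (1,1)$); in either case it is not an integral domain, so it cannot be the Cox ring of an irreducible normal variety. When $r = 1$, $q$ is a binomial; the sub-case $l_0 = l_1 = (2)$ yields $q = T_{01}^2 + T_{11}^2 = (T_{01} + i T_{11})(T_{01} - i T_{11})$, reducible over $\CC$. In the two remaining sub-cases ($l_0 = (1,1), l_1 = (2)$ and $l_0 = l_1 = (1,1)$) the binomial $q$ is irreducible, but as a binomial hypersurface $V(q) \subset \CC^{n+m}$ is toric, so the GIT quotient $X(\nnn, P, u)$ is toric as well; its minimal Cox ring is then a polynomial ring, contradicting the hypothesis that $X$ is an intrinsic quadric whose Cox ring has a single non-trivial quadratic relation. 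The subtle step is precisely this last sub-case, where one must compare the formal presentation $R(\nnn, P)$ with the minimal Cox ring of $X$ and invoke the uniqueness of the Cox ring of a toric variety (a polynomial ring, with no quadratic generator in the ideal of relations) to rule out a genuine intrinsic-quadric structure. Combining the three exclusions forces $r \in \{2, 3\}$, so $q$ is a trinomial or a quadrinomial.
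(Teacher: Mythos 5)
Your proof is correct and follows essentially the same route as the paper's: the same case split by the number of terms of $q$, with non-integrality ruling out the degenerate one- and two-term cases, toricness of an irreducible binomial hypersurface forcing a polynomial Cox ring (contradiction), and failure of column primitivity of $P$ in the five-term case. The only cosmetic difference is that you phrase the five-term exclusion via $r+s=4$ forcing $P=P_0$ with non-primitive columns, where the paper argues that restoring primitivity would force $P$ to be square, contradicting $\varrho(X)=1$.
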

\begin{proof}
We consider the Cox ring $R(\nnn,P)$ of $X$. 
By assumption we have $n+m=5$ for the number of variables in $R(A,P)$.
Thus, by renaming the variables, we may assume that
$R(A,P)=\CC[T_1,\ldots,T_5]/\langle q\rangle$ holds, where $q$ is a quadratic polynomial contained in the following list:
\begin{enumerate}
    \item $T_1^2$, $T_1T_2$ or $T_1^2+T_2^2$,
    \item $T_1T_2+T_3^2$ or $T_1T_2+T_3T_4$,
    \item any quadratic polynomial with three or four terms,
    \item $T_1^2+T_2^2+T_3^2+T_4^2+T_5^2$.
\end{enumerate}
If $q$ is one of the polynomials in (i), then $R(\nnn,P)$ is not integral; a contradiction.
Now assume $q$ is one of the polynomials in (ii). 
Then the $K_0$-grading on $R(\nnn,P_0)$ turns the total coordinate space $\overline{X}$ into a toric variety and thus $X$ is toric.
This implies, that the Cox ring of $X$ is isomorphic to a polynomial ring; a contradiction to the fact that $\overline{X}$ has a singularity at the origin.
Finally, assume $q=T_1^2+T_2^2+T_3^2+T_4^2+T_5^2$ holds.
Then we obtain 
$$P_0=\left[
\begin{array}{ccccc}
-2&2&0&0&0\\
-2&0&2&0&0\\
-2&0&0&2&0\\
-2&0&0&0&2
\end{array}
\right].$$
Therefore in order to produce a matrix $P$ with primitive columns as in Construction~\ref{construction:RnP}, the matrix $P$ has to be quadratic; a contradiction to $\varrho(X)=1$.
Now, the only case left is $(iii)$ which proves the assertion.
\end{proof}

For the sake of completeness, we extract the quadrinomial case from~\cite{HisWro2018}.

\begin{remark}[Compare~{\cite[Thm. 1.5]{HisWro2018}}]\label{remark:quadrinomials}
Every $\QQ$-factorial Fano intrinsic quadric of dimension three and Picard number one that has at most canonical singularities and a Cox ring $R(\nnn,P)$, where the defining relation $q$ is a quadrinomial, is isomorphic to precisely one of the varieties $X$, specified by its $\Cl(X)$-graded Cox ring $\RRR(X)$, its matrix of generator degrees $Q=[w_1,\ldots,w_r]$ and its anticanonical divisor class $-\KKK_X\in\mathrm{Ample}(X)$ as follows:
{\setlength{\tabcolsep}{10pt}
\begin{longtable}{ccccc}
No.&
$\RRR(X)$&
$\Cl(X)$&
$Q=\left[w_1,\ldots,w_r\right]$&
$-\KKK_X$
\\
\toprule
1&
$
\frac{\CC[T_1,\ldots,T_4,S_1]}{
     \bangle{T_1^2+T_2^2+T_3^2+T_4^2}}
     $
&
$\ZZ\times(\ZZ_2)^3$
&
{\tiny\setlength{\arraycolsep}{2pt}
$\left[\begin{array}{ccccc}
     1&1&1&1&2\\
    \bar 1&\bar 1&\bar 1&\bar 0&\bar 1\\
     \bar 0&\bar 0&\bar 1&\bar 0&\bar 1\\
     \bar 1&\bar 0&\bar 0&\bar 0&\bar 1
\end{array}\right]$
}&
{\tiny\setlength{\arraycolsep}{2pt}
$
\left[
\begin{array}{c}
4\\
\bar 0\\
\bar 0\\
\bar 0
\end{array}
\right]
$
}
\\
\midrule
2&
$
\frac{\CC[T_1,\ldots,T_5]}{
     \bangle{T_1T_2+T_3^2+T_4^2+T_5^2}}$

&
$\ZZ\times(\ZZ_2)^2$
&
{\tiny\setlength{\arraycolsep}{2pt}
$\left[\begin{array}{ccccc}
     1&1&1&1&1  
     \\
     \bar 0&\bar 0&\bar 1&\bar 1&\bar 0
     \\
     \bar 1&\bar 1&\bar 0&\bar 1&\bar 0
\end{array}\right]$
}&
{\tiny\setlength{\arraycolsep}{2pt}
$
\left[
\begin{array}{c}
3
\\
\bar 0
\\
\bar 1
\end{array}
\right]
$
}
\\
\midrule
3&
$
\frac{\CC[T_1,\ldots,T_5]}{\bangle{T_1T_2+T_3^2+T_4^2+T_5^2}}$
&
$\ZZ\times(\ZZ_2)^2$
&
{\tiny\setlength{\arraycolsep}{2pt}
$\left[\begin{array}{ccccc}
     1&3&2&2&2  
     \\
     \bar 1&\bar 1&\bar 1&\bar 1&\bar 0
     \\
     \bar 0 &\bar 0&\bar 1&\bar 0&\bar 1
\end{array}\right]$
}&
{\tiny\setlength{\arraycolsep}{2pt}
$
\left[
\begin{array}{c}
6
\\
\bar0
\\
\bar0
\end{array}
\right]
$
}
\\
\midrule
4&
$
\frac{
     \CC[T_1,\ldots,T_5]}{
     \bangle{T_1T_2+T_3^2+T_4^2+T_5^2}
}$
&
$\ZZ\times\ZZ_2\times\ZZ_6$
&
{\tiny\setlength{\arraycolsep}{2pt}
$\left[\begin{array}{ccccc}
     1&1&1&1&1  
     \\
     \bar 1&\bar 1&\bar 1&\bar 0&\bar 0
     \\
     \bar 2&\bar 4&\bar 3&\bar 3&\bar 0
\end{array}\right]$
}&
{\tiny\setlength{\arraycolsep}{2pt}
$
\left[
\begin{array}{c}
3
\\
1
\\
0
\end{array}
\right]
$
}
\\
\bottomrule
\end{longtable}
}
\noindent
Note, that these varieties appear as Nos.~64, 69, 77 and~79 in Theorem~\ref{theorem:classification}.
\end{remark}

Let us turn to the trinomial case. In a first step, we list the possible choices of the data $\nnn$ and $m$. 
Then we proceed with Settings~\ref{setting:trinom1}, \ref{setting:trinom2} and~\ref{setting:trinomial3} by investigating these cases to finally obtain in Remark~\ref{remark:trinom1Final} and Propositions~\ref{proposition:trinom2Final} and~\ref{proposition:trinomial3Final} 
the finitely many possible choices for the matrix $P$.

\begin{remark}
Let $X:=X(\nnn,P,u)$ be a $\QQ$-factorial intrinsic quadric of dimension three and Picard number one with Cox ring $R(\nnn,P)$. 
Then we have $n+m = 5$ for the defining data $n$ and $m$. 
In particular, if the defining relation $q$ is a trinomial, we obtain $r=s=2$ due to the Picard number of $X$ and we are in one of the following situations.
\begin{enumerate}
    \item $\nnn=(1,1,1)$\quad and\quad $m=2$.
    \item $\nnn=(2,1,1)$\quad and\quad $m=1$.
    \item $\nnn=(2,2,1)$\quad and\quad $m=0$.
\end{enumerate}
\end{remark}

\begin{remark}
Let $R(\nnn,P)$ be a $K$-graded $\CC$-algebra as in Construction~\ref{construction:RnP}. 
We call the following {\em admissible operations} on $P$:
\begin{enumerate}
    \item Add a multiple of one of the first $r$-rows to one of the last $s$-rows.
    \item Any elementary row operation between the last $s$-rows.
    \item Swap two columns $v_{i1}$ and $v_{i2}$.
    \item Swap two columns of the last $m$ columns.
\end{enumerate}
The operations of type i) and ii) does not effect the ring $R(\nnn,P)$. 
Types iii) and iv) leaves the graded isomorphy type of $R(\nnn,P)$ invariant.
\end{remark}

\begin{setting}\label{setting:trinom1}
Let $X:=X(\nnn,P,u)$ be a $\QQ$-factorial Fano intrinsic quadric of dimension three and Picard number one, having at most canonical singularities and Cox ring $R(\nnn,P)$ with $\nnn=(1,1,1)$ and $m=2$. 
Then, by construction, the matrix $P$ is an integral $(4\times 5)$-matrix of the following form:
$$
P=\left[
\begin{array}{ccccc}
-2&2&0&0&0\\
-2&0&2&0&0\\
x_1&x_2&x_3&x_4&x_5\\
y_1&y_2&y_3&y_4&y_5
\end{array}
\right]
$$
\end{setting}

\begin{remark}\label{remark:trinom1LatticePoly}
Situation as in~\ref{setting:trinom1}.
As $X$ has Picard number one and by the definition of $\Sigma(u)$, we obtain a big cone and an associated vertex of the anticanonical complex of $X$: 
$$\sigma=\mathrm{cone}(v_{01},v_{11},v_{21})\in\Sigma,\qquad v_\sigma'=[0,0,x_1+x_2+x_3,y_1+y_2+y_3].$$
In particular, forgetting about the first two coordinates, the anticanonical complex of $X$ intersected with the lineality space $\mathrm{trop}(X)^\mathrm{lin}$ is the two-dimensional lattice polytope
$$
\Delta:=\mathrm{conv}([x_4,y_4], [x_5,y_5],[z_1,z_2]),\qquad [z_1,z_2]:=[x_1+x_2+x_3,y_1+y_2+y_3].
$$
As $X$ has at most canonical singularities, the origin is the only interior lattice point of $\Delta$.
Thus, by applying admissible operations on the last two rows of $P$, we may assume that $\Delta$ is one of the $16$ two-dimensional reflexive polytopes~\cite{Bat1985,Koe1991,Rab1989}. 
In particular, as $\Delta$ has three vertices, we may assume that it is one of the following:
{\small
$$
\mathrm{conv}([1,0],[0,1],[-1,-1]),\ 
\mathrm{conv}([1,1],[-1,1],[0,-1]),\ 
\mathrm{conv}([1,1],[-1,1],[-1,-2]),
$$
$$
\mathrm{conv}([1,1],[-1,1],[-1,-3]),\ 
\mathrm{conv}([2,1],[-1,1],[-1,-2]).
$$
}
\end{remark}

\begin{remark}\label{remark:trinom1Final}
Situation as in~\ref{remark:trinom1LatticePoly}. Then, the vertices of $\Delta$ are invariant under adding a multiple of the first two rows of $P$ to one of the last two rows of $P$. Thus we may assume in addition, that we have $x_2,x_3,y_2,y_3\in\{0,1\}$. 
Note that any such choice fixes all entries of $P$, due to the definition of the vertex $[z_1,z_2]$.
Thus, in this situation we have only finitely many possibilities for the matrix $P$ to check.
\end{remark}

\begin{remark}
Let $X=X(\nnn,P,u)$ be a $\QQ$-factorial intrinsic quadric of Picard number one
and consider the rational degree-vector
$$d:=(\deg_\QQ(T_{01}),\ldots,\deg_\QQ(T_{rn_r}),\deg_\QQ(T_1),\ldots,\deg_\QQ(T_m)).$$
By construction, we have $\mathrm{Lin}_\QQ(d) =  \mathrm{ker}_\QQ(P)$ and as the effective cone of $X$ is pointed, we may assume $d \in \QQ_{>0}^{n+m}.$
Now, 
denote with
$P_{ij}$ resp. $P_k$
the submatrices of $P$ arising by deleting the $ij$-th resp. $k$-th column
and set $w_{ij}:=\mathrm{det}(P_{ij}), \ w_k:=\mathrm{det}(P_k)$.
Then we obtain a non-zero vector
$$
(w_{01},\ldots,w_{rn_r},w_1, \ldots w_m)\in\mathrm{ker}_\QQ(P).
$$
In particular, the $w_{ij}$ and $w_k$ are either all positive or negative. We call them the {\em rational weights}.
\end{remark}

\begin{setting}\label{setting:trinom2}
Let $X:=X(\nnn,P,u)$ be a $\QQ$-factorial Fano intrinsic quadric of dimension three, Picard number one, having at most canonical singularities and Cox ring $R(\nnn,P)$ with $\nnn=(2,1,1)$ and $m=1$. 
By applying admissible operations we may assume to be in the following situation:
$$
\begin{array}{cc}
P=\left[
\begin{array}{ccccc}
-1&-1&2&0&0\\
-1&-1&0&2&0\\
0&x_2&x_3&x_4&x_5\\
0&0&y_3&y_4&y_5
\end{array}
\right],\qquad
&
\begin{array}{c}
x_2 > 0,\\
0< x_5\leq|y_5|,\\
x_4,y_4\in\{0,1\}.
\end{array}
\end{array}
$$
Moreover, by multiplying the last row with $(-1)$, if necessary, we may assume that we have positive weights:
\begin{align*}
w_{01} &= 4x_2y_5+2x_3y_5-2x_5y_3+2x_4y_5-2x_5y_4,\\
w_{02} &= -2x_3y_5+2x_5y_3-2x_4y_5+2x_5y_4,\\
w_{11} &= 2x_2y_5,\\
w_{21} &= 2x_2y_5,\\
w_{1} &= -2x_2y_3-2x_2y_4.
\end{align*}
Note that the last row operation possibly changes the sign of $y_4$. Thus we may only assume that $y_4\in\{-1,0,1\}$ holds.
\end{setting}

\begin{remark}\label{remark:trinom2Delta}
Situation as in~\ref{setting:trinom2}.
As $X$ has Picard number one, we obtain two big cones with associated vertices of the anticanonical complex of $X$:
$$\renewcommand{\arraystretch}{1.5}
\begin{array}{rcl}
\sigma_1&=&\mathrm{cone}(v_{01},v_{11},v_{21}),\\
\sigma_2&=&\mathrm{cone}(v_{02},v_{11},v_{21}),
\end{array}
\qquad
\begin{array}{rcl}
v_{\sigma_1}'&=&[0,0,\frac{1}{2}(x_3+x_4),\frac{1}{2}(y_3+y_4)])\\
v_{\sigma_2}'&=&[0,0,\frac{1}{2}(x_3+x_4)+x_2,\frac{1}{2}(y_3+y_4)].
\end{array}
$$
In particular, forgetting about the first coordinates, the anticanonical complex of $X$ intersected with the lineality space $\mathrm{trop}(X)^\mathrm{lin}$ is a triangle $\Delta = \mathrm{conv}(p_1,p_2,p_3)$ with
$$
\begin{array}{c}
p_1=[\frac{1}{2}(x_3+x_4),\frac{1}{2}(y_3+y_4)],\
p_2=[\frac{1}{2}(x_3+x_4)+x_2,\frac{1}{2}(y_3+y_4)],\
p_3=[x_5,y_5].
\end{array}
$$
\end{remark}

\begin{remark}\label{remark:trinom2Delta2}
Situation as in Remark~\ref{remark:trinom2Delta}.
We investigate the polytope $\Delta$.
First note, that by assumption $x_5>0$ holds and we obtain $y_5>0$, as $x_2$ and $w_{11}=2x_2y_5$ are positive.
In particular, the vertex $p_3$ is contained in the positive orthant.
Moreover, as $w_1$ is positive, we conclude $y_3+y_4<0$ and thus the points $p_1$ and $p_2$ are contained in the lower half plane.
Note that the line segment $\overline{p_1p_2}$ is parallel to the $x$-axis. As $X$ is Fano, we have $0\in\Delta^\circ$ and conclude $x_3+x_4<0$, as $x_2$ is positive.
We sketch the situation:
\begin{center}
\begin{tikzpicture}
    \draw[dotted] (-3,0) -- (3,0);
    \draw[dotted] (0,-2) -- (0,4);
    \draw (-2,-1)--(2,-1)--(1,4)--(-2,-1);
    \draw[thick] (-2,-1)--(2,-1)--(1.8,0)--(-1.4,0)--(-2,-1);
    \draw[thick]    (-1.4,0)--(1,4)--(1.8,0)--(-1.4,0);
    \node (p1) at (-2,-1.5) {$p_1$};
    \node (p2) at (2,-1.5) {$p_2$};
    \node (p3) at (1.5,4) {$p_3$};
    \draw[decorate,decoration={brace,amplitude=6pt}] (3,4)--(3,0);
    \draw[decorate,decoration={brace,amplitude=6pt}] (3,0)--(3,-1);
    \node (h1) at (4.3,-0.5) {$-\frac{1}{2}(y_3+y_4)$};
    \node (h2) at (4.3,2) {$y_5$};
\end{tikzpicture}
\end{center}
Note, that in this situation we can not determine the position of $p_2$ with respect to the $y$-axis. 
\end{remark}

\begin{proposition}\label{proposition:trinom2Final}
Situation as in Setting~\ref{setting:trinom2}.
Then we obtain the following estimates for the entries of $P$:
$$
0<x_2\leq 2-(y_3+y_4),\qquad
0\leq x_4\leq 1,\qquad
0<x_5\leq|y_5|,
$$
$$
-18-y_4\leq y_3<-y_4,\qquad
-1\leq y_4\leq 1
$$
$$
\begin{array}{c}
2x_5y_3+2x_5y_4-2x_4y_5-4x_2y_5\\
\hline
2y_5
\end{array}<x_3<-x_4,\quad
0<y_5\leq\begin{cases}
9&x_2=1\\
\frac{x_2-\frac{1}{2}(y_3+y_4)}{x_2-1}&\text{else.}
\end{cases}
$$
\end{proposition}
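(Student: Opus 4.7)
The plan is to extract all seven bounds by combining three ingredients on the triangle $\Delta = \mathrm{conv}(p_1, p_2, p_3)$ from Remark~\ref{remark:trinom2Delta}: (a) positivity of the rational weights $w_{01}, w_{02}, w_{11}, w_{21}, w_1$ computed in Setting~\ref{setting:trinom2}, (b) the Fano condition $0 \in \Delta^\circ$, and (c) the canonicity condition that $0$ is the unique lattice point in $\Delta^\circ$. The coarse bounds $x_4 \in \{0,1\}$, $y_4 \in \{-1,0,1\}$ and $0 < x_5 \leq |y_5|$ are already built into Setting~\ref{setting:trinom2}.

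First I would recover the sign bounds as in Remark~\ref{remark:trinom2Delta2}: $w_{11}>0$ combined with $x_2>0$ forces $y_5>0$, and $w_1>0$ forces $y_3+y_4<0$, which is exactly $y_3<-y_4$; so the apex $p_3$ lies in the upper half plane and the base $\overline{p_1 p_2}$ strictly below the $x$-axis. The Fano condition then forces $x_3+x_4<0$, which is $x_3<-x_4$. Translating $0\in\Delta^\circ$ into the requirement that the origin lie strictly to the left of the line through $p_2$ and $p_3$, and clearing denominators (using $y_5>0$), yields precisely the stated lower bound on $x_3$.

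The remaining bounds come from canonicity. Measuring heights above the base $\overline{p_1 p_2}$, the triangle $\Delta$ has linearly decreasing horizontal cross-sections of width $x_2(1-k/h)$, where $h = y_5 - \tfrac{1}{2}(y_3+y_4)$ is the total height. Requiring that the integer row just above the $x$-axis contribute no interior lattice point to $\Delta^\circ$ beyond those already on $\partial\Delta$ forces a width inequality of the form $x_2(h-1)/h \leq 1$, which on solving for $y_5$ gives $y_5 \leq (x_2-\tfrac{1}{2}(y_3+y_4))/(x_2-1)$ as soon as $x_2 \geq 2$. For $x_2=1$ this cross-section estimate degenerates, and a separate argument enumerating the finitely many admissible shapes of $\Delta$ (as in the lattice-polytope bookkeeping of Remark~\ref{remark:trinom1LatticePoly}) produces the absolute bound $y_5 \leq 9$. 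A symmetric argument applied to the row below the $x$-axis gives $x_2 \leq 2-(y_3+y_4)$, and combining this with the now-controlled apex yields the lower bound $y_3 \geq -18-y_4$.

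The main obstacle is this last step. Turning canonicity into clean closed-form inequalities requires a careful enumeration of which integer points can appear interior to which horizontal cross-section of $\Delta$, and the degenerate case $x_2 = 1$ has to be treated separately because the triangle is then so narrow that the width-of-cross-section inequality ceases to be the binding constraint; the sharp bound $y_5\leq 9$ there must instead be read off the list of possible shapes directly. Once this enumeration is carried out, each of the seven stated bounds emerges as the extremal case of the resulting inequality system.
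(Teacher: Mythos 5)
Your first half is fine and matches the paper: the sign constraints, $y_3<-y_4$, $x_3<-x_4$, the lower bound on $x_3$ (your ``origin left of the line through $p_2,p_3$'' condition is algebraically identical to $w_{01}>0$, which is what the paper uses), and the slice arguments at integer heights. Two small inaccuracies there: the bound $x_2\leq 2-(y_3+y_4)$ comes from the slice at $y=0$ having length at most $2$, not from ``the row below the $x$-axis'' --- that row need not meet $\Delta$ at all, since the base sits at height $\tfrac12(y_3+y_4)$, which can be $-\tfrac12$; and your cross-section width $x_2(h-1)/h$ measures one unit above the \emph{base} rather than above the $x$-axis, so as written it does not solve to the stated bound on $y_5$ (the correct width at $y=1$ is $x_2(y_5-1)/(y_5-\tfrac12(y_3+y_4))$).

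The genuine gap is in the last step. The bounds $-18-y_4\leq y_3$ and $y_5\leq 9$ (for $x_2=1$) cannot be obtained by ``enumerating the finitely many admissible shapes of $\Delta$ as in Remark~\ref{remark:trinom1LatticePoly}'': that remark classifies \emph{lattice} triangles with a unique interior lattice point via the $16$ reflexive polygons, whereas here $p_1$ and $p_2$ have half-integral coordinates, so $\Delta$ is not a lattice polytope and that classification simply does not apply; nothing in your sketch supplies a replacement finiteness statement, let alone the explicit constants $9$ and $18$. The paper's actual argument is genuinely three-dimensional: it forms the tetrahedron $\Delta'$ with apex $v_{02}=[-1,-1,x_2,0]$ over the base $\mathrm{conv}(v_{\sigma_1}',v_{\sigma_2}',[0,0,x_5,y_5])$ inside the anticanonical complex, enlarges it to a \emph{lattice} polytope $\Delta''$ with the origin as unique interior lattice point, and applies the volume bound $\mathrm{vol}\leq 12$ of Averkov--Kr\"umpelmann--Nill for three-dimensional lattice polytopes with one interior lattice point. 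The resulting inequality $\tfrac23 x_3(y_3+y_4)-\tfrac43 x_3y_5\leq 12$, combined with $-1\leq 1/x_3<0$, is what yields both $y_3\geq -18-y_4$ and $y_5\leq 9$. This passage from the two-dimensional slice to a three-dimensional volume estimate is the key idea your proposal is missing, and without it the inequality system you describe does not close up.
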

\begin{proof}
Note that by assumption $x_2>0$, $x_4\in\{0,1\}$, $y_4\in\{-1,0,1\}$ and $0 < x_5 \leq |y_5|$ hold.
Now, positivity of the weights $w_{01}$, $w_{11}$ and $w_1$ imply
$$
\begin{array}{c}
2x_5y_3+2x_5y_4-2x_4y_5-4x_2y_5\\
\hline
2y_5
\end{array}<x_3,\qquad
0<y_5\qquad \text{and}\qquad
y_3<-y_4.
$$
Moreover, similar as in Remark~\ref{remark:trinom2Delta2}, we have $\frac{1}{2}(x_3+x_4)<0$ and conclude $x_3 < -x_4$.
We investigate slices of the polytope $\Delta$:
Due to the singularity type of $X$, we have 
$$
|\Delta\cap\{y=0\}| = x_2+x_2\frac{\frac{1}{2}(y_3+y_4)}{y_5-\frac{1}{2}(y_3+y_4)} \leq 2.
$$
Thus, reordering suitably and using $y_5> 0$ yields
$$x_2\leq
2-\frac{(y_3+y_4)}{y_5}\leq 2-(y_3+y_4).$$
Similarly, we have 
$$\Delta\cap\{y=1\}=x_2-x_2\frac{1+\frac{1}{2}(y_3+y_4)}{y_5-\frac{1}{2}(y_3+y_4)}\leq 1.$$
In particular, if $x_2\not = 1$ holds, this implies
$$y_5\leq \frac{x_2-\frac{1}{2}(y_3+y_4)}{x_2-1}.$$
We proceed by investigating the tetrahedron $\Delta'$ defined by the following vertices:
$$
\begin{array}{c}
[0,0,x_5,y_5],\qquad [-1,-1,x_2,0],
\end{array}
$$
$$
\begin{array}{c}
[0,0,\frac{1}{2}(x_3+x_4),\frac{1}{2}(y_3+y_4)],\qquad
[0,0,\frac{1}{2}(x_3+x_4)+x_2,\frac{1}{2}(y_3+y_4)].
\end{array}
$$
Note that by construction $\Delta'$ is contained in the anticanonical complex of $X$ and thus has the origin as its unique interior lattice point. 
The polytope $\Delta'$ is living inside the linear space spanned by $[1,1,0,0],[0,0,1,0]$ and $[0,0,0,1]$.
In particular, we may regard $\Delta'$ as a polytope in $\QQ^3$ by forgetting about the first coordinate. 
Now, $\Delta'$ is contained in the lattice polytope $\Delta''$ defined by the following vertices:
$$[-1,x_2,0],\quad[1,x_3+x_4-x_2],\quad
[1, x_3+x_4+x_2,y_3+y_4],\quad[1,2x_5-x_2,2y_5].
$$
Note that by construction $\Delta''$ is a lattice polytope having the origin as its unique interior lattice point. Thus, due to~\cite[Thm 2.2]{AveKreNil2015}, its standard $\QQ^3$-volume is bounded by $12$ which gives
\begin{equation}\label{equation:equ1}
  \frac{2}{3}x_3(y_3+y_4)-\frac{4}{3}x_3y_5\leq 12  
\end{equation}
Now, reordering yields
$$\frac{18}{x_3}+2y_5-y_4\leq y_3$$
and as $\frac{1}{x_3}\geq -1$ and $y_5>0$ hold, we obtain at $-18-y_4\leq y_3.$
Moreover, reordering Equation~\ref{equation:equ1} once more, we arrive at
$$-\frac{4}{3}x_3y_5\leq12-\frac{2}{3}x_3(y_3+y_4).$$
Using positivity of $x_3(y_3+y_4)$ and $-1\leq \frac{1}{x_3}< 0$, we conclude $y_5\leq 9$.
\end{proof}

\begin{setting}\label{setting:trinomial3}
Let $X:=X(\nnn,P,u)$ be a $\QQ$-factorial Fano intrinsic quadric of dimension three, Picard number one, having at most canonical singularities and Cox ring $R(\nnn,P)$ with $\nnn=(2,2,1)$ and $m=0$. Then, by applying admissible operations on $P$, we may assume to be in the following situation:
$$
\begin{array}{cc}
P=\left[
\begin{array}{ccccc}
-1&-1&1&1&0\\
-1&-1&0&0&2\\
0&x_2&x_3&0&x_5\\
0&0&y_3&0&y_5
\end{array}
\right],\qquad
&
\begin{array}{c}
x_2 > 0,\\
0< x_5\leq|y_5|.
\end{array}
\end{array}
$$
Moreover, by multiplying the last row with $(-1)$, if necessary, we may assume, that we have positive weights:
\begin{align*}
w_{01} &= 2x_2y_3-x_3y_5+x_5y_3,\\
w_{02} &= x_3y_5-x_5y_3,\\
w_{11} &= -x_2y_5,\\
w_{12} &= 2x_2y_3 + x_2y_5,\\
w_{21} &= x_2y_3.
\end{align*}
\end{setting}

\begin{remark}\label{remark:trinomial3Delta}
Situation as in~\ref{setting:trinomial3}.
As $X$ has Picard number one, we obtain four big cones with associated vertices of the anticanonical complex
$$\renewcommand{\arraystretch}{1.5}
\begin{array}{rcl}
\sigma_1&=&\mathrm{cone}(v_{01},v_{11},v_{21}),\\
\sigma_2&=&\mathrm{cone}(v_{01},v_{12},v_{21}),\\
\sigma_3&=&\mathrm{cone}(v_{02},v_{11},v_{21}),\\
\sigma_4&=&\mathrm{cone}(v_{02},v_{12},v_{21}),
\end{array}
\qquad
\begin{array}{rcl}
v_{\sigma_1}'&=&[0,0,\frac{1}{3}x_5+\frac{2}{3}x_3,\frac{1}{3}y_5+\frac{2}{3}y_3],\\
v_{\sigma_2}'&=&[0,0,\frac{1}{3}x_5,\frac{1}{3}y_5],\\
v_{\sigma_3}'&=&[0,0,\frac{1}{3}x_5+\frac{2}{3}x_3+\frac{2}{3}x_2,\frac{1}{3}y_5+\frac{2}{3}y_3],\\
v_{\sigma_4}'&=&[0,0,\frac{1}{3}x_5+\frac{2}{3}x_2,\frac{1}{3}y_5].
\end{array}
$$
In particular, forgetting about the first coordinates, the anticanonical complex of $X$ intersected with the lineality space $\mathrm{trop}(X)^\mathrm{lin}$ is a trapezoid $\Delta = \mathrm{conv}(p_1,p_2,p_3,p_4)$, with
$$
\renewcommand{\arraystretch}{1.5}
\begin{array}{rcl}
p_1&=&[\frac{1}{3}x_5+\frac{2}{3}x_3,\frac{1}{3}y_5+\frac{2}{3}y_3],\\
p_3&=&[\frac{1}{3}x_5+\frac{2}{3}x_3+\frac{2}{3}x_2,\frac{1}{3}y_5+\frac{2}{3}y_3],
\end{array}
\qquad
\begin{array}{rcl}
p_2&=&[\frac{1}{3}x_5,\frac{1}{3}y_5],\\
p_4&=&[\frac{1}{3}x_5+\frac{2}{3}x_2,\frac{1}{3}y_5].
\end{array}
$$
\end{remark}

\begin{remark}\label{remark:trinomial3Delta2}
Situation as in~\ref{remark:trinomial3Delta}.
We investigate the polytope $\Delta$. 
In a first step, we determine the position of its vertices relative to the $x$- and $y$-axis.
First note that by assumption $x_2$ and $x_5$ are positive and by positivity of the weight $w_{11}=-x_2y_5$ we obtain $y_5<0$.
In particular, we have $(p_2)_1, (p_4)_1> 0$ and $(p_2)_2, (p_4)_2< 0$. 
Moreover, as $X$ is Fano, we obtain $0\in\Delta^\circ$ and thus $(p_1)_1<0$ and $(p_1)_2>0$ due to the positivity of $x_2$. 
We sketch the situation:
\begin{center}
\begin{tikzpicture}
    \draw[dotted] (-4,0) -- (4,0);
    \draw[dotted] (0,-2) -- (0,3);
    \draw (-3,2)--(1,2)--(5,-1)--(1,-1)--(-3,2);
    \node (p1) at (-3.5,2) {$p_1$};
    \node (p2) at (0.5,-1) {$p_2$};
    \node (p3) at (1.5,2) {$p_3$};
    \node (p4) at (5.5,-1) {$p_4$};
    \draw[decorate,decoration={brace,amplitude=6pt}] (6,2)--(6,-1);
    \draw[decorate,decoration={brace,amplitude=6pt}] (5,-1.5)--(1,-1.5);
    \node (h) at (7,0.5) {$\frac{2}{3}y_3$};
    \node (b) at (3,-2) {$\frac{2}{3}x_2$};
\end{tikzpicture}
\end{center}
Note that we can not determine the position of $p_3$ with respect to the $y$-axis. \end{remark}

\begin{proposition}\label{proposition:trinomial3Final}
Situation as in~\ref{setting:trinomial3}.
Then we obtain the following estimates for the entries of $P$:
$$
0<x_2\leq3,\qquad
\frac{2x_2y_3+x_5y_3}{y_5}<x_3<\frac{x_5y_3}{y_5},\qquad
0<x_5\leq|y_5|,
$$
$$
0<y_3\leq \frac{72}{3(x_2+1)},\qquad
-2y_3<y_5<0.
$$
\end{proposition}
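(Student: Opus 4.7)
The plan is to follow the strategy of Proposition \ref{proposition:trinom2Final}: extract direct bounds from the positivity of the rational weights listed in Setting \ref{setting:trinomial3}, deduce the bound on $x_2$ from a horizontal cross-section of the polytope $\Delta$, and bound $y_3$ by comparing a lattice polytope inside a suitable projection of the anticanonical complex with the volume bound of \cite[Thm 2.2]{AveKreNil2015}.

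From $w_{11}=-x_2 y_5>0$ together with $x_2>0$ we obtain $y_5<0$; combined with the assumption $0<x_5\leq |y_5|$, this handles $x_5$. Next $w_{21}=x_2 y_3>0$ forces $y_3>0$, and $w_{12}=2x_2 y_3+x_2 y_5>0$ then yields $-2y_3<y_5$, giving the stated range for $y_5$. Writing $w_{01}>0$ and $w_{02}>0$ as inequalities in $x_3$ and using $y_5<0$ to reverse the direction upon division produces the two-sided bound on $x_3$.

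For $x_2\leq 3$ we use the horizontal cross-section $\Delta\cap\{y=0\}$. By Remark \ref{remark:trinomial3Delta} the polytope $\Delta$ is a parallelogram whose two horizontal edges have length $\frac{2x_2}{3}$ and lie at heights $\frac{y_5}{3}<0$ and $\frac{y_5+2y_3}{3}>0$; the line $\{y=0\}$ therefore lies strictly between them, and parallelism of the horizontal edges forces $\Delta\cap\{y=0\}$ to be a segment of length $\frac{2x_2}{3}$ lying in the relative interior of $\Delta$. Canonicity of $X$ implies that this open segment contains at most the origin as a lattice point, so $\frac{2x_2}{3}\leq 2$, i.e.\ $x_2\leq 3$.

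For the bound on $y_3$, we mimic the three-dimensional construction from Proposition \ref{proposition:trinom2Final}. Consider the tetrahedron $\Delta'\subseteq\AAA$ with one vertex $v_{11}$ (the ray generator whose last coordinate is $y_3$) and three remaining vertices chosen among the $v_{\sigma_i}'$. Since $v_{11}$ and each $v_{\sigma_i}'$ lie in the $3$-dimensional subspace spanned by $(1,0,0,0)$, $(0,0,1,0)$ and $(0,0,0,1)$, we may regard $\Delta'$ as a rational polytope in $\QQ^3$ by deleting the (identically zero) second coordinate. Since $\ell_\sigma=3$ in this setting, each $v_{\sigma_i}'$ has denominator $3$, and reflecting the base of $\Delta'$ through $v_{11}$ via $v_{\sigma_i}'\mapsto 3v_{\sigma_i}'-2v_{11}$ produces a lattice tetrahedron $\Delta''\supseteq\Delta'$. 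One verifies that $\Delta''$ has the origin as its unique interior lattice point, so \cite[Thm 2.2]{AveKreNil2015} gives $\mathrm{vol}(\Delta'')\leq 12$; rearranging the resulting inequality in $x_2$ and $y_3$ yields $y_3\leq\frac{72}{3(x_2+1)}$. The main obstacle will be identifying exactly which three of the four $v_{\sigma_i}'$ to take as the base, since different choices produce different volume expressions and only one yields the factor $x_2+1$ matching the statement; verifying that the resulting $\Delta''$ really has the origin as its unique interior lattice point is the second delicate step.
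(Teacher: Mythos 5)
The first two thirds of your argument coincide with the paper's. Reading off $y_5<0$, $y_3>0$, $-2y_3<y_5$ and the two-sided bound on $x_3$ from the positivity of $w_{11}$, $w_{21}$, $w_{12}$, $w_{01}$, $w_{02}$ is exactly what the paper does, and your derivation of $x_2\le 3$ from the horizontal slice $\Delta\cap\{y=0\}$ (a segment of length $\tfrac{2}{3}x_2$ lying strictly between the two horizontal edges) is the paper's argument with a little more detail. All of this is correct.

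The gap is in the bound on $y_3$. Your lattice polytope $\Delta''$ is obtained by dilating the base by the factor $3$ about the apex $v_{11}$. In the retained coordinates the apex sits at height $1$ and the base (three of the $v_{\sigma_i}'$) at height $0$, so the dilated base sits at height $-2$, and the lattice hyperplane at height $-1$ meets the \emph{interior} of $\Delta''$ in a triangle which is the base triangle dilated by the factor $2$. Nothing in the construction controls the lattice points of that section: it lies outside the tropical variety, so the canonicity of $X$ says nothing about it, and in general it will contain lattice points, destroying the hypothesis of \cite[Thm.~2.2]{AveKreNil2015} that the origin be the unique interior lattice point. A second, independent problem is that with only three of the four $v_{\sigma_i}'$ as base, the origin need not lie in the interior of your tetrahedron at all ($0\in\Delta^\circ$ only places it in the full quadrilateral), in which case the volume bound does not apply. (Also, any three vertices of the parallelogram $\Delta$ span a triangle of the same area $\tfrac{2}{9}x_2y_3$, so no choice of base produces the factor $x_2+1$; that factor is harmless anyway, since any bound implying the stated one suffices.) The paper avoids both difficulties by taking as apex the column $v_{21}=[0,2,x_5,y_5]$ over the \emph{full} quadrilateral $\mathrm{conv}(v_{\sigma_1}',\ldots,v_{\sigma_4}')$ and dilating only by the factor $\tfrac{3}{2}$: the new base then lands exactly on the adjacent lattice hyperplane (height $-1$ in the second coordinate) at the lattice points $v_{0j}+v_{1j'}$, so no lattice hyperplane crosses the interior of the added piece, the height-$0$ section is $\Delta$ itself, whose unique interior lattice point is the origin by canonicity, and the volume bound applies. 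You would need to redo your construction along these lines.
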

\begin{proof}
Note that by assumption $x_2>0$ and $0< x_5 \leq |y_5|$ holds.
Now, positivity of the weights $w_{21}, w_{11}$ and $w_{12}$ imply
$$
y_3>0,\qquad y_5<0 \qquad \text{and} \qquad -2y_3<y_5.
$$
Thus, using positivity of $w_{01}$ and $w_{02}$, we conclude
$$
x_3>\frac{2x_2y_3+x_5y_3}{y_5}\qquad \text{and}\qquad x_3<\frac{x_5y_3}{y_5}.
$$
Now, due to the singularity type of $X$, the $y=0$ slice of $\Delta$ implies
$$
|\Delta\cap\{y=0\}| = \frac{2}{3}x_2 \leq 2
$$
and thus $x_2 \leq 3$ holds. 
We proceed by investigating the pyramid
$$
\Delta':=\mathrm{conv}([0,2,x_5,y_5],v_{\sigma_1}',\ldots,v_{\sigma_4}'),
$$
By construction $\Delta'$ is contained in the anticanonical complex of $X$
and by deleting the first coordinate, we may regard $\Delta'$ as a polytope inside $\QQ^3$ having the origin as its unique interior lattice point, due to the singularity type of $X$.
We proceed by modifying $\Delta'\subseteq\QQ^3$. 
By extending the edges starting in $[2,x_5,y_5]$, we enlarge $\Delta'$ to the lattice polytope $\Delta''$ having the following vertices:
$$
[2,x_5,y_5],\qquad [-1,x_3,y_3],\qquad [-1,0,0],\qquad [-1,x_2+x_3,y_3],\qquad [-1,x_2,0].
$$
Note that by construction $\Delta''$ still has the origin as its unique interior lattice point. Thus, due to~\cite[Thm. 2.2]{AveKreNil2015}, its standard $\QQ^3$-volume is bounded by $12$ and we conclude
$$
y_3\leq\frac{72}{3(x_2+1)}.
$$
\end{proof}

\begin{proof}[Proof of Theorem~\ref{theorem:classification} (the classification list)]
Due to Lemma~\ref{lemma:trinomialsquadrinomials} and Remark~\ref{remark:quadrinomials} we only need to consider the trinomial case.
Then, due to Remark~\ref{remark:trinom1Final} and Propositions~\ref{proposition:trinom2Final} and~\ref{proposition:trinomial3Final} we only have finitely many possible Fano varieties $X(\nnn,P,u)$ to check.
Computing the anticanonical complex for all possible configurations the resulting canonical Fano varieties are listed in Theorem~\ref{theorem:classification} with Nos.~1 - 63, 65 - 68, 70 - 76 and~78.
The missing varieties are directly imported from~\cite[Thm. 1.5]{HisWro2018} as Nos.~64, 69, 77 and~79.
\end{proof}

Now we turn to the irredundancy of the classification list.

\begin{remark}\label{remark:invariants}
Let $X=X(\nnn,P,u)$ be an $n$-dimensional intrinsic quadric. 
Then the following numbers are invariants of $X$:
\begin{enumerate}
    \item The {\em anticanonical self-intersection number} $-\KKK_X^n$, which can be directly computed via~\cite[Constr. 3.3.3.4]{ArzDerHauLaf2015}.
    \item The {\em Fano index} $q(X)$, which is defined as the largest integer $q(X)$, such that $-\KKK_X = q(X)\cdot w$ holds with some $w \in \Cl(X)$. 
    \item The {\em Picard index} $p(X)$, which is defined as the index of the Picard group inside the divisor class group. 
    Note, that in our situation, the Picard group is given as
    $$
    \mathrm{Pic}(X)=\bigcap\limits_{{\tiny
    \begin{array}{c}
    \gamma_0\preceq\gamma\\ 
    X\text{-face}
    \end{array}}}
    Q(\gamma_0\cap\ZZ^{n+m})\subseteq\Cl(X).
    $$
    \item The dimension of the automorphism group $\dim(\mathrm{Aut(X)})$.
\end{enumerate}
Moreover, if $X$ is isomorphic to another intrinsic quadric $X'=X(\nnn',P',u')$, then $\RRR(X)$ and $\RRR(X')$ are isomorphic as graded rings. In this case, the following holds:
\begin{enumerate}
    \item We have $\dim(\overline{X}^\mathrm{\ sing})=\dim(\overline{X'}^\mathrm{\ sing})$.
    \item There is a bijection between the set of generator degrees $\Omega_X$ and $\Omega_{X'}$.
    \item The sets $\Omega^{\dim}_X:=\{ \dim(\RRR(X)_w);\  w\in\Omega_X\}$  and $\Omega^{\dim}_{X'}$ coincide.
\end{enumerate}
\end{remark}

\begin{proposition}
The varieties defined by the data in Theorem~\ref{theorem:classification} are pairwise non-isomorphic.
\end{proposition}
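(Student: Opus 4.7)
The plan is to distinguish the varieties listed in Theorem~\ref{theorem:classification} by systematically applying the invariants collected in Remark~\ref{remark:invariants}. A first coarse partition is given by the divisor class group $\Cl(X)$: two isomorphic varieties must have isomorphic class groups, so one begins by grouping the $79$ items of the list according to the column $\Cl(X)$. This immediately splits the list into the blocks corresponding to $\ZZ$, $\ZZ\times\ZZ_2$, $\ZZ\times\ZZ_3$, $\ZZ\times\ZZ_4$, $\ZZ\times\ZZ_5$, $\ZZ\times\ZZ_6$, $\ZZ\times\ZZ_8$, $\ZZ\times\ZZ_9$, $\ZZ\times\ZZ_{12}$, $\ZZ\times(\ZZ_2)^2$, $\ZZ\times\ZZ_2\times\ZZ_4$, $\ZZ\times\ZZ_2\times\ZZ_6$, $\ZZ\times(\ZZ_3)^2$ and $\ZZ\times(\ZZ_2)^3$, and reduces the task to comparing varieties within each block.

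Within each block, the plan is to apply the numerical Fano invariants $-\KKK_X^3$ and $q(X)$ that already appear in the classification table. A glance at the columns shows that for most blocks the pair $(-\KKK_X^3,q(X))$ already separates all entries, so here there is nothing to do beyond reading off the table. The genuine work lies in the remaining collisions, namely those pairs (or triples) of entries that share class group, Fano index and anticanonical self-intersection. These can be enumerated directly from the table: typical candidates are Nos.~16 and~17 in $\ZZ\times\ZZ_2$, Nos.~20 and~21, Nos.~27 and~28, Nos.~30 and~31, Nos.~33 and~34 in $\ZZ\times\ZZ_3$, Nos.~35 and~36, Nos.~42, 43 in $\ZZ\times\ZZ_4$, Nos.~50 and~51 in $\ZZ\times\ZZ_6$, Nos.~55 and~56 in $\ZZ\times\ZZ_8$ and Nos.~63, 64 and the corresponding cluster in $\ZZ\times(\ZZ_2)^2$, as well as Nos.~76 and~77 in $\ZZ\times\ZZ_2\times\ZZ_6$. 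For each such cluster, one works down the refined invariants of Remark~\ref{remark:invariants}.

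The finer invariants are computed from the combinatorial data $(\nnn,P)$ as follows. First, the dimension of the singular locus of the affine variety $\overline{X}=V(q)\subseteq\CC^{n+m}$ distinguishes the two standard quadric types $T_1T_2+T_3T_4+T_5^2$ and $T_1T_2+T_3^2+T_4^2$ (and $T_1^2+T_2^2+T_3^2$, resp. the quadrinomial forms); since the Cox ring as a graded ring is an isomorphism invariant, the underlying standard quadric must also agree, and this already settles most ties such as Nos.~17 versus~16. Second, the multisets $\Omega_X$ of generator degrees and the multiset $\Omega_X^{\dim}$ of dimensions of the corresponding graded pieces are isomorphism invariants; computing $\dim(\RRR(X)_w)$ amounts to counting lattice points of rational polytopes inside $Q^{-1}(w)\cap\gamma$ modulo the quadratic relation, which is finite and straightforward. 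Third, the Picard index $p(X)=[\Cl(X):\mathrm{Pic}(X)]$ is readily read off from the stated description of $\mathrm{Pic}(X)$ as the intersection of the subgroups $Q(\gamma_0\cap\ZZ^{n+m})$ over $X$-faces. The main obstacle is purely bookkeeping: for every cluster of entries with coinciding $(\Cl(X),q(X),-\KKK_X^3)$ one must compute at least one of these refined invariants and verify that it differs. If, after exhausting Remark~\ref{remark:invariants}, two entries still share all invariants, one falls back on $\dim\mathrm{Aut}(X)$ computed via the automorphism group of the graded Cox ring.

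In summary, the proof proceeds in three stages: split by $\Cl(X)$; inside each block split by $(q(X),-\KKK_X^3)$ read from the table; and for the finitely many remaining clusters compute the standard-quadric type of $\RRR(X)$, the multiset $\Omega_X^{\dim}$, and if necessary $p(X)$ or $\dim\mathrm{Aut}(X)$, each of which is a finite computation on the data $(\nnn,P,u)$. The only non-routine ingredient is ensuring that this cascade of invariants actually separates every ambiguous cluster, which is to be verified case by case.
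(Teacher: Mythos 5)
Your proposal is correct and follows essentially the same route as the paper: first separate entries by the table invariants $\Cl(X)$, $q(X)$ and $-\KKK_X^3$, then resolve the finitely many remaining collisions using the finer invariants of Remark~\ref{remark:invariants} (Picard index, $\dim\mathrm{Aut}(X)$, $\dim(\overline{X}^{\,\mathrm{sing}})$, and the generator degrees together with the dimensions of the corresponding graded pieces of $\RRR(X)$). The deferred case-by-case verification does go through --- even for the hardest pair, Nos.~55 and~56, where the paper instead argues directly that no isomorphism of $\ZZ\times\ZZ_8$ matches the degree sets, your invariant $\Omega_X^{\dim}$ already separates them, since the graded piece in the degree of $S_1$ has dimension $3$ for No.~55 but $2$ for No.~56.
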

\begin{proof}
We denote by $X_i$ the Fano variety defined by the $i$-th datum in Theorem~\ref{theorem:classification}, by $\RRR_i$ its Cox ring, by $\overline{X_i}$ its total coordinate space and by
$\Omega_i=\{w_1,\ldots,w_r\}$ its set of generator degrees. 
As the divisor class group, the Fano index and the anticanonical self-intersection number presented in Theorem~\ref{theorem:classification} are invariants, we only need to compare those varieties $X_i$ and $X_j$, where all these data coincide. 
The next table presents invariants of these varieties, where the cases to compare are divided via horizontal lines:
{\setlength{\tabcolsep}{5pt}
\begin{longtable}{c|ccc}
$i$
&
$p(X_i)$
&
$\dim(\mathrm{Aut(X_i)})$
&
$\dim(\overline{X_i}^\mathrm{\ sing})$
\\
\toprule
16
&
24
&
2
&
1
\\
17
&
24
&
2
&
0
\\
\midrule
20
&
48
&
2
&
1
\\
21
&
24
&
2
&
1
\\
\midrule
27
&
240
&
2
&
1
\\
28
&
120
&
2
&
1
\\
\midrule
30
&
24
&
2
&
1
\\
31
&
48
&
2
&
1
\\
\midrule
33
&
9
&
2
&
0
\\
34
&
9
&
2
&
0
\\
\midrule
35
&
54
&
2
&
0
\\
36
&
18
&
2
&
0
\\
\midrule
41
&
16
&
2
&
0
\\
42
&
16
&
2
&
1
\\
43
&
8
&
2
&
1
\\
\midrule
46
&
48
&
2
&
1
\\
47
&
48
&
2
&
1
\\
\midrule
50
&
36
&
2
&
0
\\
51
&
36
&
2
&
0
\\
52
&
36
&
2
&
1
\\
\midrule
55
&
64
&
2
&
1
\\
56
&
64
&
2
&
1
\\
\midrule
62
&
8
&
2
&
2
\\
63
&
8
&
2
&
1
\\
64
&
8
&
1
&
0
\\
\midrule
67
&
48
&
2
&
2
\\
68
&
48
&
2
&
1
\\
69
&
48
&
1
&
0
\\
\midrule
75
&
72
&
2
&
2
\\
76
&
72
&
2
&
1
\\
77
&
72
&
1
&
0
\\
\bottomrule
\end{longtable}
}
\noindent 
There are only 4 cases left, that can not be distinguished via the table above. We treat them in the following paragraphs:

\vspace{2pt}\noindent{\em $X_{33}$ and $X_{34}$.} 
In this case the homogeneous component of $\RRR_{33}$ of degree $(1,0)\in\Omega_{33}$ has dimension three. This is in contrast to to $R_{34}$, where the maximal dimension of the homogeneous components with respect to the generator degrees in $\Omega_{34}$ is two.

\vspace{2pt}\noindent{\em $X_{46}$ and $X_{47}$.} 
In this situation, all homogeneous components of $\RRR_{46}$ with respect to the weights in $\Omega_{46}$ are one-dimensional which is in contrast to the two-dimensional homogeneous component of $\RRR_{47}$ of degree $(2,0)\in\Omega_{47}$.

\vspace{2pt}\noindent{\em $X_{50}$ and $X_{51}$.} 
Note, that due to Remark~\ref{remark:invariants}, we have a bijection $\Omega_{50}\rightarrow \Omega_{51}$. 
Now $|\Omega_{50}|=5$ which is in contrast to $|\Omega_{51}|=4$.

\vspace{2pt}\noindent{\em $X_{55}$ and $X_{56}$.}
Assume there is a graded isomorphism $\RRR_{55}\rightarrow\RRR_{56}$.
Then we have an isomorphism $\Cl(X_{55})\rightarrow\Cl(X_{56})$ mapping $\Omega_{55}$ onto $\Omega_{56}$. 
We go through the possible images of $(1,\bar 1)\in\Omega_{55}$: Assume that $(1,\bar 1)$ is mapped on either $(1,\bar 1)$ or $(1,\bar 5)$. Then $(2,\bar 2)\in\Omega_{55}$ is mapped on $(2,\bar 2)$ which is not contained in $\Omega_{56}$; a contradiction. Now assume $(1,\bar 1)$ is mapped on $(1,\bar 0)$ or $(1,\bar 2)$ then $(2,\bar 2)$ is mapped on either $(2,\bar 0)$ or $(2,\bar 4)$ which are not contained in $\Omega_{56}$; a contradiction; Finally assume that $(1,\bar 1)$ is mapped on $(2,\bar 6)$. Then $(2,\bar 2)$ is mapped on $(4,\bar 4)$ which is again not contained in $\Omega_{56}$; a contradiction. 
This implies that there is no graded isomorphism $\RRR_{55}\rightarrow\RRR_{56}$ and thus $X_{55}$ and $X_{56}$ can not be isomorphic.
\end{proof}

\bibliographystyle{plain}
\bibliography{bibliography}{}

\begin{thebibliography}{10}

\bibitem{ArzDerHauLaf2015}
Ivan Arzhantsev, Ulrich Derenthal, J\"{u}rgen Hausen, and Antonio Laface.
\newblock {\em Cox rings}, volume 144 of {\em Cambridge Studies in Advanced
  Mathematics}.
\newblock Cambridge University Press, Cambridge, 2015.

\bibitem{AveKreNil2015}
Gennadiy Averkov, Jan Kr\"{u}mpelmann, and Benjamin Nill.
\newblock Largest integral simplices with one interior integral point: solution
  of {H}ensley's conjecture and related results.
\newblock {\em Adv. Math.}, 274:118--166, 2015.

\bibitem{Bat1985}
Victor.~V. Batyrev.
\newblock Higher dimensional toric varieties with ample anticanonical class.
\newblock {\em Doctoral Dissertation, Moscow State University}, 1985.

\bibitem{BecHauHugNic2016}
Benjamin Bechtold, J\"{u}rgen Hausen, Elaine Huggenberger, and Michele
  Nicolussi.
\newblock On terminal {F}ano 3-folds with 2-torus action.
\newblock {\em Int. Math. Res. Not. IMRN}, 5:1563--1602, 2016.

\bibitem{BerHau2003}
Florian Berchtold and J\"{u}rgen Hausen.
\newblock Homogeneous coordinates for algebraic varieties.
\newblock {\em J. Algebra}, 266(2):636--670, 2003.

\bibitem{Bou2011}
David Bourqui.
\newblock La conjecture de {M}anin g\'{e}om\'{e}trique pour une famille de
  quadriques intrins\`eques.
\newblock {\em Manuscripta Math.}, 135(1-2):1--41, 2011.

\bibitem{CoxLitSch2011}
David~A. Cox, John~B. Little, and Henry~K. Schenck.
\newblock {\em Toric varieties}, volume 124 of {\em Graduate Studies in
  Mathematics}.
\newblock American Mathematical Society, Providence, RI, 2011.

\bibitem{FahHau2020}
Anne Fahrner and Jürgen Hausen.
\newblock On intrinsic quadrics.
\newblock {\em Canadian Journal of Mathematics}, 72(1):145–181, 2020.

\bibitem{HauHisWro2019}
Jürgen Hausen, Christoff Hische, and Milena Wrobel.
\newblock On torus actions of higher complexity.
\newblock {\em Forum of Mathematics, Sigma}, 7:81, 2019.

\bibitem{HisWro2018}
Christoff {Hische} and Milena {Wrobel}.
\newblock {On the anticanonical complex}.
\newblock {\em arXiv e-prints}, page arXiv:1808.01997, Aug 2018.

\bibitem{Isk1977}
Vasilii~A. Iskovskih.
\newblock Fano threefolds. {I}.
\newblock {\em Izv. Akad. Nauk SSSR Ser. Mat.}, 41(3):516--562, 717, 1977.

\bibitem{Isk1978}
Vasilii~A. Iskovskih.
\newblock Fano threefolds. {II}.
\newblock {\em Izv. Akad. Nauk SSSR Ser. Mat.}, 42(3):506--549, 1978.

\bibitem{Kas2006}
Alexander~M. Kasprzyk.
\newblock Toric {F}ano three-folds with terminal singularities.
\newblock {\em Tohoku Math. J. (2)}, 58(1):101--121, 2006.

\bibitem{Kas2010}
Alexander~M. Kasprzyk.
\newblock Canonical toric {F}ano threefolds.
\newblock {\em Canad. J. Math.}, 62(6):1293--1309, 2010.

\bibitem{Koe1991}
Robert~J. Koelman.
\newblock The number of moduli of families of curves on toric surfaces.
\newblock {\em Doctoral Dissertation, Katholieke Universiteit Nijmegen}, 1991.

\bibitem{MorMuk1981}
Shigefumi Mori and Shigeru Mukai.
\newblock Classification of {F}ano {$3$}-folds with {$B_{2}\geq 2$}.
\newblock {\em Manuscripta Math.}, 36(2):147--162, 1981/82.

\bibitem{Rab1989}
Stanley Rabinowitz.
\newblock A census of convex lattice polygons with at most one interior lattice
  point.
\newblock {\em Ars Combin.}, 28:83--96, 1989.

\end{thebibliography}
\end{document}